\setlist[enumerate]{nosep}
\setlist[enumerate]{label = (\alph*)}
\newtheorem{as}{Assumption}
\newtheorem{lm}{Lemma}
\newtheorem{defin}{Definition}
\newtheorem{pp}{Proposition}
\newtheorem{rem}{Remark}
\newcommand{\tcap}{\text{cap}}
\newcommand{\tilr}{\widetilde{r}}
\newcommand{\sumi}{\sum_{i=1}^n}
\newcommand{\sumj}{\sum\nolimits_{j\neq i}^n}
\newcommand{\sumt}{\sum_{t=0}^{\infty}}
\newcommand{\sumk}{\sum_{k=0}^{\infty}}
\newcommand{\btau}{\overline{\tau}}
\newcommand{\utau}{\underline{\tau}}
\newcommand{\x}{\mathbf{x}}
\newcommand{\y}{\mathbf{y}}
\newcommand{\z}{\mathbf{z}}
\newcommand{\vv}{\mathbf{v}}
\newcommand{\w}{\mathbf{w}}
\newcommand{\ep}{\bm{\epsilon}}
\newcommand{\barx}{\overline{\x}}
\def\T{\mathsf{T}}
\def\X{\mathcal{X}}
\def\F{\widehat{F}}
\def\bone{{\mathbf{1}}}
\def\bzero{{\mathbf{0}}}
\title{Asynchronous Networked Aggregative Games} % ANESA
\author{Rongping Zhu
\thanks{
    R. Zhu, J. Zhang and K. You are with the Department of Automation and BNRist, 
    Tsinghua University, Beijing 100084, China. 
    Emails: \texttt{zhurp19@mails.tsinghua.edu.cn, zjq16@mails.tsinghua.edu.cn, youky@tsinghua.edu.cn.}
    }
\and Jiaqi Zhang \footnotemark[1] \and Keyou You \footnotemark[1] \and Tamer~Ba\c{s}ar 
\thanks{
    T. Ba\c{s}ar is with Department of Electrical and Computer Engineering and Coordinated Science Laboratory, University of Illinois at Urbana-Champaign, Urbana, IL 61801 USA.
    Email: \texttt{basar1@illinois.edu.}
    }
}
\begin{document}

\maketitle

\begin{abstract}                          % Abstract of not more than 200 words.
We propose a fully asynchronous networked aggregative game (Asy-NAG) where each player minimizes a cost function that depends on its local action and the aggregate of all players' actions.
In sharp contrast to the existing NAGs, each player in our Asy-NAG can compute an estimate of the aggregate action at {\em any} wall-clock time by only using (possibly stale) information from nearby players of a directed network.
Such an asynchronous update does not require any coordination among players.
Moreover, we design a novel distributed algorithm with an aggressive mechanism for each player to adaptively adjust the optimization stepsize per update.
Particularly, the slow players in terms of updating their estimates smartly increase their stepsizes to catch up with the fast ones.
Then, we develop an augmented system approach to address the asynchronicity and the information delays between players, and rigorously show the convergence to a Nash equilibrium of the Asy-NAG via a perturbed coordinate algorithm which is also of independent interest.
Finally, we evaluate the performance of the distributed algorithm through numerical simulations.
\end{abstract}
%%%%%%%%%%%%%%%%%%%%%%%%%%%%%%%%%%%%%%%%%%%%%%%%%%%%%%%%%%%%%%%%%%%%%%%%%%%%%%%%

\section{Introduction}

An aggregative game is a Nash game where each player's cost is a function of its action and the aggregate of all players' actions, and has been widely used in flow control \citep{alpcanGametheoreticFrameworkCongestion2002,barreraDynamicIncentivesCongestion2015}, resource allocation \citep{maDecentralizedChargingControl2013, basarPriceBaced2016}, and demand response \citep{liDemandResponseUsing2015,maDecentralizedChargingControl2013}. 
In this work, we consider the \textit{Networked Aggregative Games} (NAGs) over a directed peer-to-peer (P2P) network, where each player individually makes decisions by only using (possibly stale) information from neighboring players of the network, see Fig. \ref{fig:digraph} for an illustrative example.

The NAGs can be categorized as synchronous or asynchronous ones depending on whether players make decisions in a synchronized manner or not. In this paper, we propose an asynchronous NAG (Asy-NAG), which in fact is {\em fully asynchronous}, adopting the asynchronism definition in \citet{PhysRevE.47.2155}.
To elaborate the differences between the synchronous NAG and the Asy-NAG, we use Fig. \ref{fig:digraph}, whose running states for each player are shown in Fig. \ref{fig:time-scale}. A gray bar represents the computation time per update of a player and the red arrows represent information flow directions between players. In a synchronous NAG (Fig. \ref{fig:time-scale-a}), \textit{all} players are essentially synchronized to compute at the same wall-clock time $t(k)$. While in an Asy-NAG (Fig. \ref{fig:time-scale-b}), each player acts \textit{independently} without waiting for other players or delayed messages. Thus, the index $k$ in Fig. \ref{fig:time-scale}(a) increases by one only when all players are synchronized to start their updates, and $k$ in Fig. \ref{fig:time-scale}(b) increases whenever a player starts an update. Importantly, the Asy-NAG does not require any (usually central) coordinator to synchronize the update process and $k$ is not known to players.   

\begin{figure}  \centering
  \includegraphics[width=0.2\linewidth]{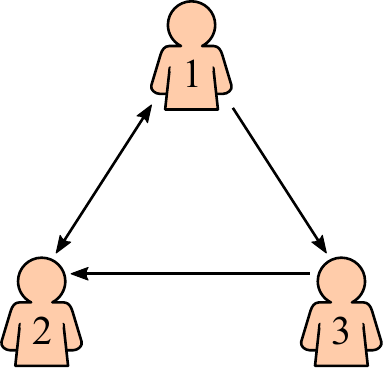}
  \caption{A directed P2P network with three players.}
  \label{fig:digraph}
\end{figure}

\begin{figure}[htp]
  \centering
  \subfloat[A synchronous NAG.]{\label{fig:time-scale-a}\includegraphics[width=0.45\linewidth]{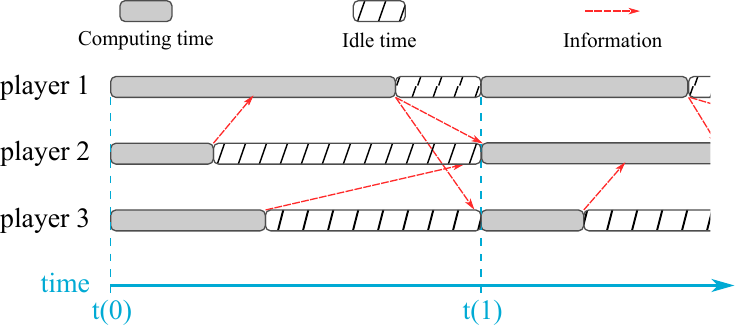}} \hfill
  \subfloat[An Asy-NAG.]{\label{fig:time-scale-b}\includegraphics[width=0.45\linewidth]{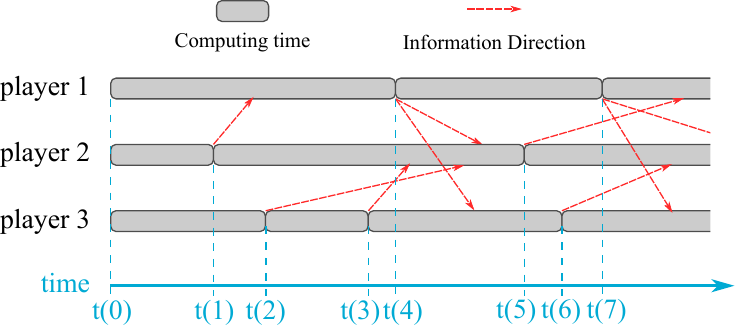}}
  \caption{Comparison of synchronous NAGs and Asy-NAGs.}
  \label{fig:time-scale}
\end{figure}

We believe such Asy-NAGs have not been studied yet, even though they are of more practical importance, in addition to being notoriously difficult to solve.
First, players in a non-cooperative aggregative game can hardly be synchronized to update their actions.
For example, players may pause to operate other tasks, or just remain idle.
In fact, the synchronization of a large-scale NAG can be hard and time-consuming.
% In contrast, the Asy-NAG does not require any synchronization, and players can update without waiting for others.
Second, due to the asynchronicity and information delays, the updating frequency of each player is unknown and unpredictable. Thus, it is considerably different from the random gossip-based NAG in \citet{koshalDistributedAlgorithmsAggregative2016, salehisadaghianiDistributedNashEquilibrium2016, cenedeseAsynchronousDistributedScalable2020}, requiring the development of a new approach for algorithm design. 

For synchronous NAGs, there are a number of distributed algorithms that converge to a Nash equilibrium (NE) by adopting the consensus-based technique \citep{koshalDistributedAlgorithmsAggregative2016,belgioiosoDistributedGeneralizedNash2019}, ADMM-based methods \citep{salehisadaghianiDistributedNashEquilibrium2019}, the best response dynamics \citep{li1987distributed,lei2020synchronous}, model-free method \citep{frihauf2012nash}, etc.
It is worth stressing that the random gossip-based NAG requires a global coordinator, which is key to their algorithm design and does not satisfy the asynchronism of the Asy-NAGs. Notably, they can be simply modeled as synchronous NAGs over randomly time-varying networks, which is impossible for our Asy-NAG and an augmented network has to be constructed in this work. Thus, the existing distributed algorithms for NAGs cannot be applied to the Asy-NAG.  

In fact, the fully asynchronous setting of this work is not new and has been studied for quite a long history in the parallel and distributed computing problems.
\citet{chazan1969chaotic} first consider it for solving linear equations problems with a parallel system.
\citet{bertsekas1983distributed} and \citet{tsitsiklis1986distributed} proposed asynchronous coordinate algorithms for fixed point problems.
Note that \citet{li1987asymptotic} studied asynchronous non-cooperative games based on the best response dynamics and every player is able to directly access all other players' actions, which is different from the NAGs. Recently, it has been exploited for solving distributed optimization problems \citep{nedic2010convergence, zhangAsynchronousDecentralizedOptimization2019, assran2020asynchronous, sha2020asynchronous}, where players (or computing nodes) cooperate to optimize a common objective function. This is clearly different from NAGs as each player needs to minimize an individual cost function.

%%%%%% Contributions %%%%%%
Accordingly, we propose a novel distributed algorithm with an aggressive stepsizes scheme to find a NE of the Asy-NAGs.
First, we design an asynchronous perturbed push-sum algorithm to dynamically estimate the aggregate action for each player over the directed network.
When being activated to update, each player uses its locally estimated aggregate action to update its own action.
Second, to address the unpredictable update time instances of players, we propose a novel aggressive scheme for each player to adaptively adjust its optimization stepsize per update.
Specifically, each player maintains a local counter to roughly track the maximum number of updates of its neighboring players, based on which slow players smartly increase stepsizes to catch up with fast ones.
Such an aggressive mechanism is in sharp contrast to the existing works by reducing the stepsize of fast players, and helps to accelerate the convergence to the NE.
A similar idea has been adopted  to address the non-exact convergence issue in distributed optimization problems in \citet{zhangAsySPAExactAsynchronous2019}.% and \citet{spiridonoff2020robust}.

% convergence
The proposed algorithm is then proved to converge to an NE of the Asy-NAG asymptotically via an augmented network approach.
Particularly, we use the virtual index $k$ to model the delayed information and a sequence of virtual nodes to construct an augmented network.
As a byproduct, we propose a perturbed coordinate pseudo-gradient algorithm for the augmented system, based on which the distributed algorithm for Asy-NAG is proved to converge to the NE.
% numerics
Finally, we validate the performance of our algorithm with numerical simulations on the renowned Nash-Cournot games over several directed networks. 

%%%%%% Structure %%%%%%
The remainder of the paper is organized as follows. In Section \ref{sec:prob}, we introduce the AGs and our Asy-NAG over a P2P network in details. Section \ref{sec:algo} proposes the distributed algorithm for the Asy-NAG. In Section \ref{sec:augmented}, we show how to construct the augmented network and Section \ref{sec:PCPA} provides a novel perturbed coordinate pseudo-gradient algorithm for the AG. In Section \ref{sec:analysis}, we prove the convergence to an NE of the proposed algorithm. Section \ref{sec:numeric} presents the results of numerical experiments. Concluding remarks are drawn in Section \ref{sec:conclusion}. Some technical proofs are included in the appendices.

\textbf{Notations:} Throughout this paper, $\x^\T$ and $X^\T$ denote the transposes of a vector $\x$ and a matrix $X$, respectively. Let $\x^\T \y$ denote the inner product of vectors $\x$ and $\y$, and $\|\x\| = \sqrt{\x^\T \x}$ denote the Euclidean norm of a vector $\x$. We use $[\x_1, \cdots, \x_n]$ to denote the horizontal stack of vectors $\x_1, \ldots, \x_n$, and $\bone_n$ and $\bzero_n$ to denote the $n$-dimensional vectors with all ones and all zeros, respectively. For a matrix $A$, we write $[A]_{ij}$ to denote its $(i,j)$-th element. For a scalar $a$, we let $\lfloor a \rfloor$ denote the largest integer less than $a$. For a set $\mathcal{N}$, let $|\mathcal{N}|$ denote its cardinality, i.e., the number of elements in $\mathcal{N}$. The Minkowski sum of two sets $\mathcal{X}$ and $\mathcal{Y}$ is formed by $\mathcal{X} + \mathcal{Y} := \{\x + \y|\x \in \mathcal{X}, \y\in\mathcal{Y}\}$, and let $a \mathcal{X} \coloneqq \{a \x | \x \in \mathcal{X}\}$. Finally, we use $\Pi_\X$ to denote the Euclidean projection operator over a closed convex set $\X$, i.e., $\Pi_\X(\x) \coloneqq \arg \min_{\z \in \X}\|\x - \z\|$.

%%%%%%%%%%%%%%%%%%%%%%%%%%%%%%%%%%%%%%%%%%%%%%%%%%%%%%%%%%%%%%%%%%%%%%%%%%%%%%%%
\section{Problem Formulation}
\label{sec:prob}

Consider a set of $n$ players indexed by $\mathcal{N} = \{1, \cdots, n\}$. For $i\in\mathcal{N}$, let $f_i(\x_i, \barx)$ denote the cost function of player $i$, where $\x_i \in \X_i \subseteq \mathbb{R}^p$ is the action of player $i$ and $\barx := \frac{1}{n}\sumi \x_i$ is the aggregate of all players' actions.
The objective of player $i$ in an aggregative game (AG) is to solve a local optimization problem
\begin{equation}
\label{eq:prob}
\begin{aligned}
\text{mininize} & \quad \textstyle f_i\left(\x_i, \frac{1}{n} \x_i  + \frac{1}{n} \sumj \x_j\right) \\
\text{subject to} & \quad \x_i \in \X_i,
\end{aligned}
\end{equation}
where $\x_j$, $j \neq i$, $j \in \mathcal{N}$ are fixed,
and to reach an NE among all players, which is defined in precise terms below \citep{bacsar1998dynamic}.
\begin{defin}
  An $n$-tuple of actions
  $$\x^* \coloneqq \left[(\x_1^*)^\T, (\x_2^*)^\T, \cdots, (\x_n^*)^\T\right]^\T$$
  is a Nash equilibrium (NE) if for all $i \in \mathcal{N}$ and $\x_i \in \X_i$,
  \begin{equation*}
    \textstyle f_i\left(\x_i^*, \barx^*\right) \leq f_i\left(\x_i, \frac{1}{n} \x_i + \frac{1}{n} \sumj \x_j^*\right).
  \end{equation*}
\end{defin}
In a centralized AG, a coordinator is needed to gather the actions from all players to compute and broadcast the aggregate action $\overline{\x}$.
On the other hand, in a networked AG (NAG), the players are spatially distributed on a P2P network and each player can only collect information from neighboring players under the topology of the network $\mathcal{G}=(\mathcal{N}, \mathcal{E})$, where $\mathcal{N}$ is the set of players and $\mathcal{E} \subseteq \mathcal{N} \times \mathcal{N}$ is the edge set for information directions, i.e., $(i, j) \in \mathcal{E}$ if and only if $i$ can directly send information to $j$.
We let $\mathcal{N}_{\text{in}}^i = \{j|(j, i) \in \mathcal{E}\} \cup \{i\}$ denote the set of in-neighbors of player $i$, and $\mathcal{N}_{\text{out}}^i = \{j|(i, j) \in \mathcal{E}\}\cup \{i\}$ the set of out-neighbors of $i$.
In this network, player $i$ is able to collect information from players in $\mathcal{N}_{\text{in}}^i$ and broadcast information to players in $\mathcal{N}_{\text{out}}^i$. If $|\mathcal{N}_{\text{in}}^i|\neq |\mathcal{N}_{\text{out}}^i|$ for some $i\in\mathcal{N}$, then $\mathcal{G}$ is unbalanced. 

In a synchronous NAG, all players are synchronized to update their actions per iteration.
In our Asy-NAG, however, players are free to update without waiting for others by only using available information from in-neighbors, which brings significant challenges to the algorithm design and analysis.

%%%%%%%%%%%%%%%%%%%%%%%%%%%%%%%%%%%%%%%%%%%%%%%%%%%%%%%%%%%%%%%%%%%%%%%%%%%%%%%%
\section{The Distributed Algorithm for the Asy-NAG}
\label{sec:algo}

In this section, we first introduce some basic conditions for the existence of an NE of the AG and formulate it as a variational inequality problem. Then, we propose a distributed algorithm for the Asy-NAG where each player asynchronously maintains an estimate of the aggregate action by using information from its in-neighbors, and updates its local action by performing a projected pseudo-gradient step.

\subsection{Variation Inequality Formulation of the AG}
\label{subsec:vi}

The following condition is widely adopted in the context of AGs \citep{belgioiosoDistributedGeneralizedNash2019,liu2020decentralized}.
\begin{as}
  \label{as:f}
  For each $i \in \mathcal{N}$, the action set $\X_i$ is convex and compact.
  Each local objective function $f_i(\x_i, \z)$ is continuously differentiable in $(\x_i, \z)$ over some open set containing $\X_i \times \overline{\X}$ where $\textstyle \overline{\X} = \frac{1}{n} \sumi \mathcal{X}_i$.
  Moreover, $f_i(\x_i, \frac{1}{n} \x_i + \overline{\x}_{-i})$ is convex in $\x_i$ over $\X_i$ for any fixed $\overline{\x}_{-i} = \frac{1}{n}\sum_{j \neq i} \x_j$.
\end{as}
Under Assumption \ref{as:f}, the AG in \eqref{eq:prob} is equivalent to solving a variational inequality problem \citep[Proposition 1.4.2]{facchineiFinitedimensionalVariationalInequalities2003} which is to determine an $\x^* \in \X:=\prod_{i=1}^n \X_i$ such that
\begin{equation}
  \label{eq:optimal}
  (\x - \x^*)^\T\phi(\x^*) \geq 0, \ \forall \x \in \X,
\end{equation}
where $\X$ and $\phi$ are defined as 
\begin{equation*}
  \X \coloneqq \prod_{i=1}^n \X_i, \quad \phi(\x) \coloneqq \begin{bmatrix}
    \nabla_{\x_1} f_1(\x_i, \overline{\x}) \\
    \vdots \\
    \nabla_{\x_n} f_n(\x_n, \overline{\x})
  \end{bmatrix}
\end{equation*}
and $\x \coloneqq [\x_1^\T, \cdots, \x_n^\T]^\T \in \X$.
The mapping $\phi$ is also called a pseudo-gradient mapping. Let
\begin{equation*}
  \label{gradient}
  F_i(\x_i, \z) = \nabla_{\x_i}f_i(\x_i, \z) + \frac{1}{n} \nabla_{\z}f_i(\x_i, \z), \  \forall i \in \mathcal{N}
\end{equation*}
and 
\begin{equation*}
  F(\x, \z) \coloneqq \begin{bmatrix}
    F_1(\x_1, \z) \\
    \vdots \\
    F_n(\x_n, \z)
  \end{bmatrix}.
\end{equation*}
Clearly, $\phi(\x) = F(\x, \overline{\x})$.

\begin{as}[Strict monotonicity]
  \label{as:mono}
  The mapping $\phi(\x)$ is strictly monotone over $\X$ in the sense that
  \begin{equation*}
    (\phi(\x) - \phi(\x'))^\T(\x - \x') > 0, \quad \forall \x, \x' \in \X,\ \x \neq \x'.
  \end{equation*}
\end{as}
\begin{as}[Lipschitz continuity]
  \label{as:FL}
  Each mapping $F_i(\x_i, \z)$ is uniformly Lipschitz continuous over $\X_i \times \overline{\X}$, i.e., there exists some $L>0$ such that for all $\x, \x' \in \X_i$ and $\z, \z' \in \overline{\X}$, 
  $$
  \|F_i(\x_i, \z) - F_i(\x_i', \z')\| \leq L \left\|\begin{bsmallmatrix}
    \x \\ \z
  \end{bsmallmatrix} - \begin{bsmallmatrix}
    \x' \\ \z'
  \end{bsmallmatrix}\right\|.
  $$
\end{as}
\begin{pp}[\citealp{koshalDistributedAlgorithmsAggregative2016}]\label{prop:unique}
Under Assumptions \ref{as:f} and \ref{as:mono}, the AG in \eqref{eq:prob} has a unique Nash equilibrium.
\end{pp}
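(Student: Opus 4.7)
The plan is to leverage the variational inequality (VI) characterization already stated in the excerpt, namely that $\x^* \in \X$ is a Nash equilibrium if and only if $(\x - \x^*)^\T \phi(\x^*) \geq 0$ for every $\x \in \X$. Once one accepts this equivalence (which is precisely Proposition 1.4.2 in Facchinei--Pang, valid because each $f_i$ is convex in $\x_i$ and $\X_i$ is convex under Assumption \ref{as:f}), existence and uniqueness of the NE reduce to existence and uniqueness of a solution to $\mathrm{VI}(\X,\phi)$.

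For \textbf{existence}, I would invoke the classical Hartman--Stampacchia theorem: the feasible set $\X = \prod_{i=1}^n \X_i$ is nonempty, convex, and compact as a finite product of nonempty convex compact sets (Assumption \ref{as:f}), and the pseudo-gradient $\phi$ is continuous on $\X$ because every $f_i$ is continuously differentiable on an open set containing $\X_i \times \overline{\X}$. Hence $\mathrm{VI}(\X,\phi)$ admits at least one solution, which provides at least one NE of \eqref{eq:prob}.

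For \textbf{uniqueness}, I would argue by contradiction: suppose $\x^*$ and $\x^{**}$ are both NEs with $\x^* \neq \x^{**}$. Writing the VI inequality at $\x^*$ tested against $\x^{**}$ and at $\x^{**}$ tested against $\x^*$ gives
\begin{equation*}
(\x^{**} - \x^*)^\T \phi(\x^*) \geq 0, \qquad (\x^* - \x^{**})^\T \phi(\x^{**}) \geq 0.
\end{equation*}
Adding these two inequalities yields $(\x^* - \x^{**})^\T (\phi(\x^{**}) - \phi(\x^*)) \geq 0$, i.e., $(\phi(\x^*) - \phi(\x^{**}))^\T (\x^* - \x^{**}) \leq 0$, which directly contradicts the strict monotonicity of $\phi$ in Assumption \ref{as:mono}. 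Therefore $\x^* = \x^{**}$.

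I do not expect a serious obstacle here: the VI reduction is already in the excerpt, and both halves of the argument are textbook. The only subtlety worth double-checking is that Assumption \ref{as:f} genuinely delivers the hypotheses needed for the VI reformulation (convexity of $f_i$ in its own argument when the aggregate is held with $\x_i$ inside it, not just as a parameter) -- but this is exactly what the second sentence of Assumption \ref{as:f} asserts, so no additional work is required. Continuity of $\phi$ likewise follows from $C^1$ smoothness of each $f_i$, and compactness of $\X$ follows from the compactness of each $\X_i$.
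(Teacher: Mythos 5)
Your proof is correct and follows essentially the same route as the source the paper cites for this result (the paper itself gives no proof, attributing it to \citealp{koshalDistributedAlgorithmsAggregative2016}): existence of a solution to $\mathrm{VI}(\X,\phi)$ from compactness and convexity of $\X$ plus continuity of $\phi$, and uniqueness by adding the two variational inequalities and invoking strict monotonicity (Assumption \ref{as:mono}). No gaps.
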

%%%%%%%%%%%%%%%%%%%%%%%%%%%%%%%%%%%%%%%%%%%%%%

\subsection{Distributed Algorithm for the Asy-NAG}
\label{subsec:algo}

In this subsection, we propose a push-sum-based distributed algorithm for the Asy-NAG in Algorithm \ref{algo:asypa} where $d_i \coloneqq|\mathcal{N}_{\text{out}}^i| \leq n$ is the out-degree of player $i$ and $\alpha_i$ is the effective stepsize for the local optimization of player $i$. The positive sequence $\{\rho(t)\}$ is used to dynamically adjust the aggressive stepsize $\alpha_i$.

\begin{algorithm}[tbp]
  \caption{The distributed algorithm for the Asy-NAG --- from the view point of player $i$ \label{algo:asypa}}

  \begin{itemize}[leftmargin=*,label={}]
    \item \textbf{Initialization}
      \begin{enumerate}[label=(\roman*)]
      \item \label{initiala}Randomly select an action $\x_i\in \X_i$.
      \item \label{initialb} Let  $\vv_i = \x_i$, $\vv'_i=\vv_i/d_i$ and $y_i=1, y'_i=y_i/d_i$.
      \item \label{initialc} Create local buffers $\mathcal{V}_i$, $\mathcal{Y}_i$ and $\mathcal{L}_i$.
      \item Let $l_i=0$ and broadcast $\vv'_i$, $y'_i$ and $l_i$ to its out-neighbors $\mathcal{N}_{\text{out}}^i$.
    \end{enumerate}
    \item \textbf{Repeat}
    \begin{itemize}[leftmargin=\widthof{aaa},label=$\bullet$]
      \item Keep receiving $\vv'_j$, $y'_j$ and $l_j$ from each in-neighbor player $j\in \mathcal{N}_{\text{in}}^i$, and store them into $\mathcal{V}_i$, $\mathcal{Y}_i$ and $\mathcal{L}_i$, respectively.
      \item If player $i$ is activated for an update, it computes
      \begin{enumerate}%[leftmargin=\widthof{aaa},label=\arabic*:, ref=\arabic*]
        \item $\w_i \leftarrow\text{sum}(\mathcal{V}_i), y_i\leftarrow\text{sum}(\mathcal{Y}_i)$, $\z_i\leftarrow{\w_i}/{y_i}$, \label{anesa:ps1}
        \item $l_i' \leftarrow \text{max}(\mathcal{L}_i)$, $\alpha_i\leftarrow\sum_{t=l_i}^{l_i'}\rho(t)$, \label{anesa:aggressive}
        \item $\x_i^- \leftarrow \x_i, \x_i \leftarrow \Pi_{\X_i}\left[\x_i-\alpha_i F_i(\x_i, \z_i)\right]$, \label{anesa:update}
        \item $\vv_i \leftarrow \w_i + \x_i - \x_i^-$, \label{anesa:ps2}
        \item $\vv'_i \leftarrow \vv_i/d_i$, $y'_i \leftarrow y_i/d_i$, \label{anesa:ps3}
        \item $l_i\leftarrow l_i'+1$. \label{anesa:l}
      \end{enumerate}
      \item Broadcast $\vv'_i$, $y'_i$ and $l_i$ to every out-neighbors $\mathcal{N}_{\text{out}}^i$, and empty $\mathcal{V}_i, \mathcal{Y}_i, \mathcal{L}_i$.
    \end{itemize}
    \item \textbf{Until} the stopping criterion is satisfied.
  \end{itemize}
\end{algorithm}

% IDEA: compare the algorithm with existing ones
The idea of Algorithm \ref{algo:asypa} is very natural. As the synchronous push-sum algorithm \citep{nedicDistributedOptimizationTimeVarying2015} for distributed optimization problems, Line \ref{initialb} in the initialization step is used to solve the unbalancedness issue of the directed network $\mathcal{G}$. However, three local buffers $\mathcal{V}_i,\mathcal{Y}_i,\mathcal{L}_i$ in Line \ref{initialc} are novelly designed to handle the asynchronicity of the Asy-NAG, which is the striking difference from the synchronous NAG. 

In the repeat step, each player keeps receiving (possibly delayed) information from its in-neighbors, and store them into the corresponding buffers.
Due to asynchronous updates of the Asy-NAG, player $i$ may have stored zero, one or multiple receptions from a single neighboring player in each buffer when it is {\em locally} activated to update, which is not the case for the gossip-based NAG \citep{koshalDistributedAlgorithmsAggregative2016, salehisadaghianiDistributedNashEquilibrium2016, cenedeseAsynchronousDistributedScalable2020}.
Instead of only using the latest reception from buffers, all data in buffers will be used to compute a new update, see Lines \ref{anesa:ps1}-\ref{anesa:l}, where $\text{sum}(\cdot)$ and $\text{max}(\cdot)$ take, respectively, summation and maximization over all elements in the input set. Then, it broadcasts the updated vectors to its out-neighbors and empties the buffers. This implies that the number of receptions in each buffer is usually limited. Interestingly, buffers are essentially not needed in practice since both summation and maximization in Lines \ref{anesa:ps1}-\ref{anesa:aggressive} can be done recursively. For example, we can simply keep ${l}_i'$ and only update its value to $l_j$ if a new $l_j>{l}_i'$ has been received from an in-neighbor.  

Our key idea for the Asy-NAG is essentially captured in Lines \ref{anesa:ps1}-\ref{anesa:aggressive}. Informally speaking, ${l}_i$ is designed to roughly track the number of updates in each player and ${l}_i'$ returns the maximum number of updates among its in-neighbors. If player $i$ finds itself update too slow, e.g., the gap between ${l}_i$ and ${l}_i'$ is large, it increases its stepsize $\alpha_i$ to compensate for slow updates. Such an aggressive scheme is in sharp contrast with the existing technique to reduce the stepsize of the fast player, hoping to achieve a faster convergence rate, and was initially proposed in our previous work \citep{zhangAsySPAExactAsynchronous2019} for solving the distributed optimization problem. The stopping criterion can be chosen such that the update of the local action and aggregate estimate remain small for a number of consecutive iterations.

The rest of this paper is devoted to proving the convergence of Algorithm \ref{algo:asypa} to an NE of the Asy-NAG. We first develop an augmented network approach to address the asynchronicity and information delays in	 Algorithm \ref{algo:asypa}, under which we obtain a synchronous coordinate-wise update over the virtual network. To prove its convergence to an NE, we then propose a coordinate pseudo-gradient algorithm, which generalizes Algorithm \ref{algo:asypa} and is of independent interest. Indeed, the fully asynchronous setting has been adopted for distributed optimization \citep{zhangAsySPAExactAsynchronous2019}  and the block coordinate optimization \citep{hannah2018a2bcd}, which however cannot be used for the AG as each player needs to minimize an individual objective function.

%%%%%%%%%%%%%%%%%%%%%%%%%%%%%%%%%%%%%%%%%%%%%%%%%%%%%%%%%%%%%%%%%%%%%%%%%%%%%%%%

\section{An Augmented Network Approach for the Asy-NAG}
\label{sec:augmented}

In this section,  we construct an augmented network where a sequence of virtual nodes is introduced for each player to model the delayed information, based on which Algorithm \ref{algo:asypa} appears to be synchronous over the virtual network. Note that the proofs in \citet{koshalDistributedAlgorithmsAggregative2016, salehisadaghianiDistributedNashEquilibrium2016, cenedeseAsynchronousDistributedScalable2020} cannot be directly applied here. 

\begin{as}
  \label{as:graph}
  \hfill
  \begin{enumerate}
    \item \textbf{(Strong connectivity)} $\mathcal{G}$ is strongly connected, i.e., there is a directed path between any two players.
    \item \textbf{(Bounded information delays)} For any edge $(i, j) \in \mathcal{E}$, the time-varying information delay from player $i$ to player $j$ is uniformly bounded by a positive constant $\tau > 0$.
    \item \textbf{(Bounded activation intervals)} Let $t_i$ and $t_i^+$ be two consecutive activation times of player $i$; then, there exist $\underline{\tau} > 0$ and $\overline{\tau} < \infty$ such that $\underline{\tau} \leq |t_i - t_i^+| \leq \overline{\tau}$.
  \end{enumerate}
\end{as}

The strong connectivity ensures each player to participate in the decision making process. The bounded communication delays assumption is common in the literature, see e.g., \citet{yi2019asynchronous, lei2020synchronous}.
The bounded activation interval is practically satisfied and is almost necessary, e.g., if $\overline{\tau}=\infty$, it suggests that some player(s) does(do) not participate in the decision-making process. The above constants are introduced for theoretical analysis and are not known by any player.

Under Assumption \ref{as:graph}(c), the activation time instances for updating must be discrete. Thus, we 
let $\mathcal{T} = \{t(k)\}_{k\geq 0}$ be an increasing sequence of the activation time instances of all players, i.e., $t \in \mathcal{T}$ if and only if there is at least one player being activated at time $t$.
Let $\mathcal{T}_i \subseteq \mathcal{T}$ be the set of activation times of player $i$.
Lemma \ref{lm:delays} is key to the construction of the augmented network.

\begin{lm}[\citealp{zhangAsySPAExactAsynchronous2019}]
  \label{lm:delays}
  The following statements hold.
  \begin{enumerate}
    \item Under Assumption \ref{as:graph}(b), letting $b_1 = (n-1)\lfloor\btau/\utau\rfloor+1$, each player is activated at least once within the time interval $(t(k), t(k+b_1)]$. Moreover, let $\mathcal{G}(k) = (\mathcal{N}, \mathcal{E}(k))$ be the activated network at $t(k)$, where $(i, j) \in \mathcal{E}(k)$ if $t(k) \in \mathcal{T}_i$ and $(i, j) \in \mathcal{E}$; then the union of networks $\bigcup_{t=k}^{k+b_1}\mathcal{G}(t)$ is strongly connected for any $k$.
    \item Under Assumptions \ref{as:graph}(b) and (c), letting $b_2 = n\lfloor\tau/\utau\rfloor+1$ and $b = b_1+b_2$, the message sent from player $i$ at $t(k)$ can be received by player $j$ before $t(k+b_2)$ and used for computing an update before $t(k+b)$ for any $k$ and $(i, j) \in \mathcal{E}$.
    \item Under Assumption \ref{as:graph}, and with $b$ defined as in (b) above, $|l_i(k) - l_j(k)| \leq nb$ and $0 \leq l_i(k+1) - l_i(k) \leq nb + 1$ for any $i, j \in \mathcal{N}$ and $k$, where $l_i(k)$ is the value of $l_i$ at $t(k)$.
  \end{enumerate}
\end{lm}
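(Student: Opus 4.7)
The three parts of Lemma \ref{lm:delays} are combinatorial timing facts about how activation events and information delays interact on the directed network, so my plan is to treat each part by a pigeonhole argument on time windows, using Assumption \ref{as:graph} repeatedly. Since the result is imported from \citet{zhangAsySPAExactAsynchronous2019}, I would mostly quote that proof, but here is how I would reconstruct it from scratch.

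For part (a), the plan is to fix a player $i$ and show it must activate within the window $(t(k), t(k+b_1)]$. By the upper bound $\btau$ on $i$'s consecutive activation gaps, player $i$ activates at least once in any time interval of length $\btau$; meanwhile, the lower bound $\utau$ on every player's activation gap means each of the other $n-1$ players can activate at most $\lfloor \btau/\utau\rfloor$ times before $i$ activates again. Hence among the next $b_1 = (n-1)\lfloor \btau/\utau\rfloor + 1$ global activation events after $t(k)$, at least one must be $i$'s. Given this, the strong connectivity of $\bigcup_{t=k}^{k+b_1}\mathcal{G}(t)$ is immediate: every player activates at least once in this window, so every edge of $\mathcal{G}$ appears in some $\mathcal{E}(t)$, and $\mathcal{G}$ itself is strongly connected by Assumption \ref{as:graph}(a).

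For part (b), I would run the same counting argument to translate wall-clock delay into a bound on global activation indices. A message posted at $t(k)$ arrives no later than $t(k)+\tau$; within a window of length $\tau$, each of the $n$ players contributes at most $\lfloor \tau/\utau\rfloor$ activations, so the global activation count in $(t(k), t(k)+\tau]$ is at most $n\lfloor \tau/\utau\rfloor$. Therefore $t(k+b_2)\ge t(k)+\tau$ with $b_2 = n\lfloor\tau/\utau\rfloor+1$, which guarantees delivery by index $k+b_2$. The message is then actually consumed at the next activation of the receiver, which by part (a) occurs within at most $b_1$ further global activations, giving the end-to-end bound $b=b_1+b_2$.

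For part (c), the counters $l_i$ evolve by $l_i\leftarrow\max(\mathcal{L}_i)+1$ whenever player $i$ updates, so any increase in one player's counter must traverse a directed path in $\mathcal{G}$ to reach any other. I would argue inductively along such a path: by parts (a) and (b), each hop consumes at most $b$ global activation indices, and a path has length at most $n-1$, so combining gives $|l_i(k)-l_j(k)|\le nb$ after matching units (one activation can shift $l_i$ by one, while in-transit messages account for the rest). The per-step growth bound $0\le l_i(k+1)-l_i(k)\le nb+1$ then follows because a non-active step leaves $l_i$ unchanged, and an active step sets $l_i$ to $1$ plus the largest neighbor value it has seen, which is at most $nb$ above its own current value by the first inequality.

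The main obstacle I expect is the exact book-keeping of the constants: small off-by-one issues (floor versus ceiling, open versus half-open intervals, whether $t(k)$ itself is counted) can easily shift $b_1$, $b_2$, and $b$ by one. Nailing the constants as stated requires an explicit case analysis of boundary activations at $t(k)$ and the first arrival of the in-flight message; once that is pinned down, the combinatorial argument itself is routine pigeonholing.
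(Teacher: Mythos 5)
First, note that the paper does not prove this lemma at all: it is imported verbatim from the cited reference (\citealp{zhangAsySPAExactAsynchronous2019}), so there is no in-paper argument to match. Your general route --- pigeonhole counting of activations in bounded wall-clock windows for (a)--(b), and propagation of counter values along directed paths for (c) --- is indeed the standard way such results are established. However, two steps in your reconstruction are genuine gaps rather than mere book-keeping. In (a) (and identically in (b)), the load-bearing claim ``each of the other $n-1$ players can activate at most $\lfloor\btau/\utau\rfloor$ times before $i$ activates again'' is asserted without proof and is false in the worst case under the stated assumptions: in a window of length at most $\btau$, a player whose gaps are only lower-bounded by $\utau$ can fit $\lfloor\btau/\utau\rfloor+1$ activations (e.g.\ $\utau=1$, $\btau=2.5$, activations at $t(k)+0.1,\,t(k)+1.1,\,t(k)+2.1$ before $i$'s next activation at $t(k)+2.5$). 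Since the whole content of the lemma is the explicit constants $b_1$, $b_2$, $b$, deferring this to ``off-by-one case analysis'' leaves precisely the claimed statement unestablished; the careful boundary/indexing conventions of the cited work are what make the stated constants come out.

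In (c) the missing idea is more substantive. Your phrase ``one activation can shift $l_i$ by one'' is not true for an individual counter --- an active player sets $l_i\leftarrow\max(\mathcal{L}_i)+1$ and can jump by up to $nb+1$, which is exactly what the second inequality of (c) says. The property that makes the hop-counting work is that the \emph{global maximum} $\max_{i}l_i(k)$ increases by at most one per global index $k$ (every buffered value is a past, hence no larger, counter value), while by (a)--(b) the current maximum value propagates to every player along a directed path of length at most $n-1$ with at most $b$ indices per hop; comparing the two rates at a common index $k$ yields $|l_i(k)-l_j(k)|\le nb$, and only then does your argument for $0\le l_i(k+1)-l_i(k)\le nb+1$ go through. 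As written, your path argument never converts ``hops times $b$ indices'' into a bound on the counter gap at a fixed $k$, so part (c) is not yet proved.
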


To construct an augmented network, we introduce $b$ virtual nodes  for each player. For player $i$, let $\{i^{(1)}, \cdots i^{(b)}\}$ be the corresponding sequence of virtual nodes. Denote by $\widetilde{\mathcal{N}}$ the set containing all players and virtual nodes. As for the edge set $\widetilde{\mathcal{E}}(k)$, edges $(i^{(1)}, i)$, $(i^{(2)}, i^{(1)})$, $\cdots$, $(i^{(b)}, i^{(b-1)})$ are always included in $\widetilde{\mathcal{E}}(k)$ for all $i$. If player $i$ is activated to update at $t(k)$, then one of the edges $(i, j)$, $(i, j^{(1)})$, $(i, j^{(2)})$, $\cdots$, $(i, j^{(b)})$ is included in $\widetilde{\mathcal{E}}$, depending on the information delay between players $i$ and $j$ at $t(k)$. Specifically, if the information from player $i$ is received by player $j$ during $(t(k + s), t(k + s + 1)]$ for some $s \in \{0, \cdots, b\}$, then $(i, j^{(s)}) \in \widetilde{\mathcal{E}}(k)$. Note that we have adopted the convention $j^{(0)} = j$. 
Then, $\widetilde{\mathcal{G}}(k) = (\widetilde{\mathcal{N}}, \widetilde{\mathcal{E}}(k))$ is a time-varying augmented network of $\mathcal{G}$ at $t(k)$. A scenario, related to the network of Fig. \ref{fig:digraph}, is depicted in Fig. \ref{fig:augmented}.

\begin{figure}[t!]
  \centering
  \includegraphics[width=0.4\linewidth]{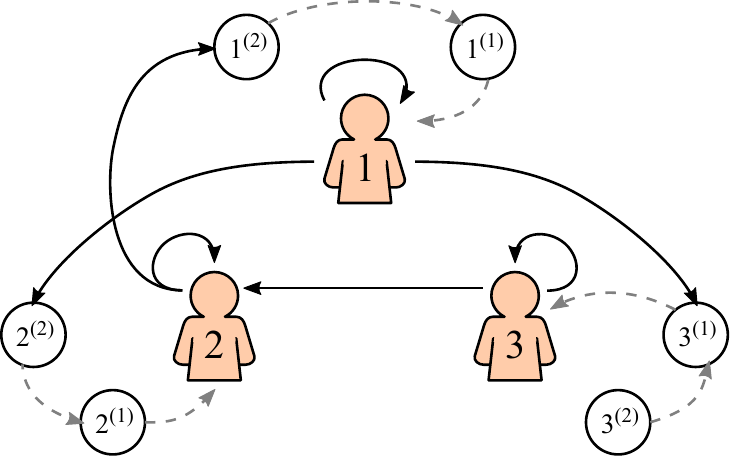}
  \caption{An augmented network of Fig. \ref{fig:digraph} with $b=2$ at $t(k)$. Player 3's message will be received by player 2 during the time interval $(t(k), t(k+1)]$, and $(3, 2) \in \mathcal{E}(k)$. Player 3 will use information from $3^{(1)}$ and itself to compute update at $t(k+1)$. Suppose that player 2 sends message to player 1 without delay but player 1 will not be activated until $t(k+3)$; then $(2, 1^{(2)}) \in \mathcal{E}(k)$. The same case applies to player 1.}
  \label{fig:augmented}
\end{figure}

Now, we reformulate Algorithm \ref{algo:asypa} as a synchronous algorithm over the time-varying virtual networks $\{\widetilde{\mathcal{G}}(k)\}$.
We denote the latest value of $\x_i$ before time $t(k)$ by $\x_i(k)$.
We adopt similar notations for $\vv_i$, $\w_i$, $\z_i$, $y_i$ and $l_i$.
Letting $\mathcal{N}^{(u)} = \{i^{(u)} | i\in\mathcal{N}\}$ for $u = 1, 2, \cdots, b$, we enumerate the players before virtual nodes, e.g., $\widetilde{\mathcal{N}} = \{\mathcal{N}, \mathcal{N}^{(1)}, \cdots, \mathcal{N}^{(b)}\}$. Let $\tilde{n} = (b+1)n$.
Every virtual node $i$ maintains $\vv_i(k)$, $\w_i(k)$ and $y_i(k)$.
Let the augmented state vector be
\begin{equation*}
  \begin{aligned}
    \widetilde{V}(k) &= [\vv_1(k), \cdots, \vv_n(k), \vv_{n+1}(k), \cdots, \vv_{\tilde{n}}(k)]^\T, \\
    \widetilde{W}(k) &= [\w_1(k), \cdots, \w_n(k), \w_{n+1}(k), \cdots, \w_{\tilde{n}}(k)]^\T, \\
    \widetilde{\y}(k) &= [y_1(k), \cdots, y_n(k), y_{n+1}(k), \cdots, y_{\tilde{n}}(k)]^\T,
  \end{aligned}
\end{equation*}
with $\vv_i(0) = \bzero_p$ and $y_i(0) = 0$ for $i = n + 1, \cdots, \tilde{n}$.

Then, Algorithm \ref{algo:asypa} is compactly written as
\begin{equation}
  \label{eq:pushsum}
  \begin{aligned}
    \widetilde{W}(k+1) &= \widetilde{A}(k)\widetilde{V}(k), \\
    \widetilde{\y}(k+1) &= \widetilde{A}(k)\widetilde{\y}(k), \\
    \z_i(k+1) &= \frac{\w_i(k + 1)}{y_i(k+1)}, \quad \forall i \in \mathcal{N} \\
    \x_i(k+1) &= \begin{cases}
      \Pi_{\X_i}\left[\x_i(k) - \alpha_i(k)F_i(\x_i(k), \z_i(k+1))\right], & \text{if } i \in \mathcal{N} \text{ and } t(k) \in \mathcal{T}_i, \\
      \x_i(k), & \text{if } i \in \mathcal{N} \text{ and } t(k) \notin \mathcal{T}_i,
      \end{cases} \\
    \widetilde{V}(k+1) &= \widetilde{W}(k+1) + \Delta X(k),
  \end{aligned}
\end{equation}
where
\begin{equation*}
  \label{eq:deltaX}
  \Delta X(k) = [\x_1(k+1) - \x_1(k), \cdots, \x_n(k+1) - \x_n(k), \bzero_p, \cdots, \bzero_p]^\T.
\end{equation*}
$\widetilde{A}(k)$ is given as
$$
  [\widetilde{A}(k)]_{ij}
  = \begin{cases}
    1/d_j, & \text{if } j\in\mathcal{N}, t(k) \in \mathcal{T}_j, i=un+i', \tau_{ji'}(k) = u\\
    1, &\text{if } j\in\mathcal{N}, t(k) \notin\mathcal{T}_j, \text{ and } i = j \\
    1, &\text{if } j \notin\mathcal{N}, \text{ and } i = j - n \\
    0, &\text{otherwise}
  \end{cases}
$$
where $d_j = |\mathcal{N}_{\text{out}}^j|$ is the out-degree of player $j$ and $\tau_{ji'}(k) \geq 0$ is the information delay from player $j$ to node $i'$ at $t(k)$. If each player can have immediate access to its own action, then $\tau_{jj}(k) = 0$ for all $j\in\mathcal{N}$ and $k \geq 0$. If $(j, i) \notin \mathcal{E}$, we have $\tau_{ji}(k) = \infty$ for all $i\in\mathcal{N}$.
Note that $\widetilde{A}(k)$ is column-stochastic and $[\widetilde{A}(k)]_{ii} > 0$ for all $i \in \mathcal{N}$ and $k \geq 0$. Thus, $y_i(k) > 0$ always holds and the division in \eqref{eq:pushsum} is well-defined.

\section{The Perturbed Coordinate Pseudo-gradient Algorithm}
\label{sec:PCPA}

Inspired by the block-coordinate method for a single objective function \citep{nesterov2012efficiency}, we first propose a novel perturbed coordinate pseudo-gradient algorithm (PCPA) which allows partial updates of players via perturbed pseudo-gradients.
Subsequently, we prove its convergence, the results being not only of independent interest, but also critical to the proof of the convergence of Algorithm \ref{algo:asypa} in the next section.

Consider the AG in \eqref{eq:prob} with a single player to update with a perturbed pseudo-gradient per iteration.
We denote the sequence of updating players by $\{s(0), s(1), \cdots\}$, where $s(k) \in \mathcal{N}$, i.e., player $s(k)$ is selected to update at the $k$th iteration.
An integer-valued sequence $\{r_i(k)\}$ is defined recursively, i.e.,
$$
r_i(k+1) = \begin{cases}
  r_i(k) + \Delta r_i(k), & \text{if } s(k) = i, \\
  r_i(k), & \text{otherwise,}
\end{cases}
$$
and $r_i(0) = 0$ for all $i \in \mathcal{N}$.  In light of the above sequence, we propose the following PCPA
\begin{equation}
  \label{eq:pcpa}
  \x_i(k+1) = \begin{cases}
    \Pi_{\X_i}\left[\x_i(k) - \alpha_i(k) \widehat{F}_i(k)\right], & \text{if } s(k) = i, \\
    \x_i(k), & \text{otherwise},
  \end{cases}
\end{equation}
where
$$
  \begin{aligned}
    \alpha_i(k) &= \textstyle \sum_{t=r_i(k)}^{r_i(k+1)-1}\rho(t), \\
    \widehat{F}_i(k) &= F_i(\x_i(k), \overline{\x}(k)) + \ep_i(k),
  \end{aligned}
$$
and $\ep_i(k)$ is the perturbation of the gradient in \eqref{gradient}. We make the following assumption.
\begin{as}
\label{as:pcpa}
\hfill
\begin{enumerate}
  \item There exists an integer $\sigma_1 \geq 0$ such that $\mathcal{N} \subseteq \{s(k+1), \cdots, s(k+\sigma_1)\}$.
  \item There exists an integer $\sigma_2 \geq 0$ such that $|r_i(k) - r_j(k)| \leq \sigma_2$ and $\Delta r_i(k) \leq \sigma_2$ for all $i, j \in \mathcal{N}$ and $k \geq 0$.
  \item The sequence $\{\rho(t)\}$ is chosen to satisfy that $0 < \rho(t + 1) \leq \rho(t)$, $\sumt \rho(t) = \infty, \sumt \rho^2(t) < \infty$.
  \item $\sum_{k=0}^{\infty}\rho(k)\|\ep_i(k)\| < \infty$ for all $i \in \mathcal{N}$.
\end{enumerate}
\end{as}

Assumption \ref{as:pcpa}(a) guarantees that each player updates at least once in a bounded time interval, which is different from the randomized block-coordinated method \citep{richtarik2014iteration}.
Assumption \ref{as:pcpa}(b) ensures the boundedness of the updating time difference  between two players. It is obvious that Assumptions \ref{as:pcpa}(a)-(b) hold for the Asy-NAG with $\sigma_1 = b_1$ and $\sigma_2 = nb + 1$, where $b_1$ and $b$ are defined in Lemma \ref{lm:delays}.
Unlike \citet{yousefian2013distributed} and \citet{bottou2010large}, the perturbation $\ep_i(k)$ is not stochastic.
Instead, it should be controlled by  non-increasing sequences in Assumptions \ref{as:pcpa}(c)-(d), which is essential to the convergence of the PCPA.

\begin{pp}\label{prop:pcpa}
  Suppose that Assumptions \ref{as:f}-\ref{as:FL} and \ref{as:pcpa} hold.
  Then, the sequence $\{\x_i(k)\}$, $i = 1, \cdots, n$, generated by the PCPA in \eqref{eq:pcpa} converges to the unique NE of the AG in \eqref{eq:prob}.
\end{pp}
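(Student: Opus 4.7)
The plan is a Lyapunov analysis of $V(k) := \|\x(k) - \x^*\|^2$, where $\x^*$ is the unique NE furnished by Proposition~\ref{prop:unique} and characterized via the VI~\eqref{eq:optimal} by the fixed-point identity $\x_i^* = \Pi_{\X_i}[\x_i^* - \alpha F_i(\x_i^*, \overline{\x}^*)]$, which holds for every $\alpha > 0$. Because only coordinate $s(k) = i$ is modified at step $k$, nonexpansiveness of $\Pi_{\X_i}$ yields, after expansion of the projection,
\begin{equation*}
V(k+1) \leq V(k) - 2\alpha_i(k)(\x_i(k) - \x_i^*)^{\T}\bigl(F_i(\x_i(k),\overline{\x}(k)) - F_i(\x_i^*,\overline{\x}^*)\bigr) + 2\alpha_i(k)\|\x_i(k) - \x_i^*\|\,\|\ep_i(k)\| + \alpha_i^2(k)\|\widehat{F}_i(k)\|^2.
\end{equation*}
Compactness of $\X$ with Assumption~\ref{as:FL} gives a uniform bound $M$ on $\|\widehat{F}_i(k)\|$ (using also Assumption~\ref{as:pcpa}(d) to tame the perturbation), and Assumption~\ref{as:pcpa}(b) gives $\alpha_i(k) \leq \sigma_2\,\rho(r_i(k))$; then Cauchy--Schwarz combined with Assumption~\ref{as:pcpa}(c) yields $\sum_k \alpha_i^2(k) \leq \sigma_2 \sum_t \rho^2(t) < \infty$.

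The one-step bound only exposes a monotone-bracket for the currently-updated coordinate. To recover the full pseudo-gradient $\phi$, I would aggregate over a window of length $\sigma_1$, within which Assumption~\ref{as:pcpa}(a) guarantees that every $i \in \mathcal{N}$ updates at least once at some local time $k + s_i$. The aggregated ``gain'' term becomes $\sum_{i=1}^n 2\alpha_i(k+s_i)(\x_i(k+s_i) - \x_i^*)^{\T}\bigl(F_i(\x_i(k+s_i),\overline{\x}(k+s_i)) - F_i(\x_i^*,\overline{\x}^*)\bigr)$. Invoking Assumption~\ref{as:FL}, each $\x(k+s_i)$ inside the bracket can be replaced by the common point $\x(k)$ at linearization cost $L\|\x(k+s_i) - \x(k)\|$. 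Since only one coordinate moves per step, this discrepancy is at most $M \sum_{t=0}^{\sigma_1-1} \alpha_{s(k+t)}(k+t) = O(\rho(k))$; multiplied by the outside factor $\alpha_i(k+s_i) = O(\rho(k))$, each correction is $O(\rho^2(k))$ and hence summable. The same principle absorbs the $\alpha_i \|\ep_i\|$ and $\alpha_i^2 \|\widehat{F}_i\|^2$ terms via Assumptions~\ref{as:pcpa}(c)--(d). This produces the master inequality
\begin{equation*}
V(k+\sigma_1) \leq V(k) - 2\widetilde{\alpha}(k)\bigl(\x(k) - \x^*\bigr)^{\T}\bigl(\phi(\x(k)) - \phi(\x^*)\bigr) + \eta(k),
\end{equation*}
with $\widetilde{\alpha}(k) = \Theta(\rho(k))$, $\sum_k \widetilde{\alpha}(k) = \infty$, and $\sum_k \eta(k) < \infty$.

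Applying a deterministic Robbins--Siegmund-style lemma along the subsequence $\{m\sigma_1\}_{m \geq 0}$ gives convergence of $V$ and $\sum_k \widetilde{\alpha}(k)(\x(k) - \x^*)^{\T}(\phi(\x(k)) - \phi(\x^*)) < \infty$. Non-summability of $\widetilde{\alpha}$ forces $\liminf_k (\x(k) - \x^*)^{\T}(\phi(\x(k)) - \phi(\x^*)) = 0$, so compactness of $\X$ and continuity of $\phi$ yield a subsequence $\x(k_\ell) \to \x^\infty$ with $(\x^\infty - \x^*)^{\T}(\phi(\x^\infty) - \phi(\x^*)) = 0$. Strict monotonicity (Assumption~\ref{as:mono}) forces $\x^\infty = \x^*$, hence $V(k_\ell) \to 0$; combined with convergence of $V$ this yields $\x(k) \to \x^*$.

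The principal obstacle is the window-aggregation step: $\alpha_i(k)$ is itself a sum of up to $\sigma_2$ consecutive $\rho$-values drawn at asynchronous clocks $r_i(k)$, and the $F_i$ evaluations are at coordinate-specific within-window iterates. The crucial balance is $\alpha_i(k+s_i)\cdot\|\x(k+s_i) - \x(k)\| = O(\rho^2(k))$; a cruder bound leaves a linearization remainder of order $\rho(k)$, which is not summable and breaks the Robbins--Siegmund step. A secondary subtlety is that Assumption~\ref{as:mono} is only strict, not strong, so the final step cannot be a direct contraction estimate on $V$ and must proceed via the cluster-point argument above.
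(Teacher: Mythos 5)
Your outline follows the paper's own route (aggregate over a bounded window, control the within-window drift by Lipschitz continuity, invoke a deterministic Robbins--Siegmund-type lemma, then conclude by strict monotonicity and a cluster-point argument), but the step where you pass from the per-player gain terms to a single stepsize $\widetilde{\alpha}(k)$ multiplying the full bracket $(\x(k)-\x^*)^\T(\phi(\x(k))-\phi(\x^*))$ has a genuine gap. After your linearization, the aggregated gain is $\sum_i \alpha_i(k+s_i)\,[\x_i(k)-\x_i^*]^\T[F_i(\x_i(k),\overline{\x}(k))-F_i(\x_i^*,\overline{\x}^*)]$ with \emph{player-dependent} stepsizes, and the individual summands are not sign-definite (only their unweighted sum over $i$ is nonnegative, by monotonicity of $\phi$), so you cannot simply replace each $\alpha_i$ by a common value up to a small error of your choosing. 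Concretely, player $i$'s total stepsize over a window of length $\sigma=\sigma_1+\sigma_2$ is $\sum_{t=r_i(k)}^{r_i(k+\sigma)-1}\rho(t)$, and substituting the common quantity $\sum_{t=\tilr(k)}^{\tilr(k+\sigma)-1}\rho(t)$, where $\tilr(k)=\max_j r_j(k)$, leaves head/tail mismatches consisting of up to $\sigma_2$ values of $\rho$ near index $\tilr(k)$. These mismatches are $O(\rho(\tilr(k)-\sigma))$ per window --- order $\rho$, not $\rho^2$ --- and since $\tilr(k)$ grows at most linearly in $k$, their absolute sum over $k$ is controlled only by a multiple of $\sum_t\rho(t)=\infty$. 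So the claim that every correction is $O(\rho^2(k))$ and hence summable fails precisely at the point you yourself identify as the principal obstacle, and your master inequality with a purely summable $\eta(k)$ does not follow from the argument sketched.

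The paper resolves this by organizing the mismatched pieces as $-\delta(k)+\delta(k+\sigma)$ plus a genuinely summable remainder, where $\delta(k)=\sum_{i}\sum_{t=r_i(k)}^{\tilr(k)-1}\rho(t)[\x_i(k)-\x_i^*]^\T[F_i(\x_i(k),\overline{\x}(k))-F_i(\x_i^*,\overline{\x}^*)]$; the summable remainder comes from shifting the bracket argument from $k$ to $k+\sigma$, which is where the $O(\rho^2)$ balance you describe does apply. Since $|\delta(k)|\leq 4nMC\sigma\,\rho(\tilr(k)-\sigma)\to 0$, the series $\sum_k[\delta(k+\sigma)-\delta(k)]$ telescopes and converges, though not absolutely; accordingly the paper's convergence lemma (Lemma~\ref{lm:msct}) is stated so that the perturbation $v_k$ need only have a convergent, not absolutely convergent, sum. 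Your proposal needs this telescoping device (and the correspondingly weakened hypothesis in the Robbins--Siegmund step) or a substitute for it; as written, the analysis breaks at the master inequality. The remaining steps --- subsampling, $\liminf$ of the monotone bracket, strict monotonicity forcing the cluster point to be $\x^*$, and returning to the full sequence using $\rho(t)\to 0$ --- match the paper and are fine.
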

\begin{proof}
  See Appendix \ref{ap:pcpa}.
\end{proof}

\section{Convergence of Algorithm \ref{algo:asypa}}
\label{sec:analysis}
Under a non-increasing sequence $\{\rho(t)\}$ in Assumption \ref{as:pcpa}(c), we show that each player is able to asymptotically track the aggregate action $\overline{\x}(k)$.
%, and the estimation error can be controlled by the selected non-increasing sequence $\{\rho_i(t)\}$.

\begin{lm}\label{lm:consensus}
	Under Assumptions \ref{as:f}, \ref{as:FL}, \ref{as:graph} and \ref{as:pcpa}(c), let $\{\z_i(k)\}$, $i = 1, \cdots, n$, be the sequence generated by the augmented network formulation \eqref{eq:pushsum} of Algorithm \ref{algo:asypa}. Then, the following statements hold.
	\begin{enumerate}
		\item For all $k\geq 0$,
    \begin{equation*}
      \textstyle\left\|\z_i(k+1)-\overline{\x}(k)\right\| \leq B \left(\lambda^k nM +\sum_{t=0}^{k}\lambda^{k-t}\Delta(t)\right),
    \end{equation*}
    where $\Delta(t) = \sumi \|\x_i(t + 1) - \x_i(t)\|$ and the constants are defined as
    $B = 8n^{nb}(1+n^{nb})$,
    $\lambda = (1-\frac{1}{n^{nb}})^{\frac{1}{nb}}$,
    and $M = \max_{i\in \mathcal{N}, \x_i \in \X_i}\|\x_i\|$.
    \item $  \lim_{k\to\infty}\|\z_i(k+1)-\overline{\x}(k)\|=0,\ \forall i\in\mathcal{N}.$
		\item \label{lm:consensusD} 
    $
      \sum_{k=0}^{\infty}\rho(k)\|\z_i(k+1)-\overline{\x}(k)\|<\infty,\ \forall i\in\mathcal{N}.
    $
	\end{enumerate}
\end{lm}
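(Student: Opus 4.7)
The plan is to do a push-sum / augmented-consensus analysis of the dynamics \eqref{eq:pushsum} and then convert the resulting geometric mixing bound into the three claimed consequences using the monotonicity of $\rho(t)$. First, I unroll the recursions: defining $\widetilde{\Phi}(k{:}t) := \widetilde{A}(k)\widetilde{A}(k-1)\cdots\widetilde{A}(t)$, induction gives
\begin{equation*}
\widetilde{W}(k+1) = \widetilde{\Phi}(k{:}0)\widetilde{V}(0) + \sum_{t=0}^{k-1}\widetilde{\Phi}(k{:}t+1)\Delta X(t), \qquad \widetilde{\y}(k+1) = \widetilde{\Phi}(k{:}0)\widetilde{\y}(0).
\end{equation*}
Column-stochasticity of $\widetilde{A}(k)$, an inductive use of $\sum_i[\Delta X(t)]_i = \sum_i(\x_i(t+1)-\x_i(t))$, and the initialization $\vv_i(0)=\x_i(0)$ for $i\in\mathcal{N}$ and $\bzero$ otherwise, yield the conservation laws $\sum_i\w_i(k+1)=n\overline{\x}(k)$ and $\sum_i y_i(k+1)=n$. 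Combining Lemma \ref{lm:delays}(a) with the construction of $\widetilde{\mathcal{G}}(k)$ shows that every window of $nb$ consecutive matrices in the product has strongly connected support on $\widetilde{\mathcal{N}}$ with positive diagonals, so the Nedi\'c--Olshevsky-type argument delivers a geometric mixing bound $\bigl|[\widetilde{\Phi}(k{:}t)]_{ij}-\pi_i(k)\bigr|\leq C\lambda^{k-t}$ with $\lambda=(1-1/n^{nb})^{1/(nb)}$.

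For part (a), I rewrite $\z_i(k+1)-\overline{\x}(k) = \w_i(k+1)/y_i(k+1) - \tfrac{1}{n}\sum_j \w_j(k+1)$, substitute the unrolled expressions, and use the mixing bound to replace each $[\widetilde{\Phi}(k{:}t)]_{ij}$ by its limit. The initialization piece contributes $\lambda^k nM$ (bounded since $\widetilde{V}(0)$ has entries in $\X_i$), the perturbation pieces contribute $\sum_{t=0}^k\lambda^{k-t}\Delta(t)$, and a uniform lower bound $y_i(k+1)\geq c>0$ coming from positive diagonals supplies the prefactor that aggregates into $B=8n^{nb}(1+n^{nb})$. For part (b), compactness of each $\X_i$ plus Assumption \ref{as:FL} give a uniform bound $\|\x_i(t+1)-\x_i(t)\|\leq G\alpha_i(t) \leq G(nb+1)\rho(l_i(t))$, where I use $\alpha_i(t)=\sum_{s=l_i(t)}^{l_i(t+1)-1}\rho(s)$ together with Lemma \ref{lm:delays}(c); since $\rho(s)\to 0$ and $l_i(t)\to\infty$, $\Delta(t)\to 0$, and a Kronecker-type lemma for geometric kernels yields $\sum_{t=0}^k\lambda^{k-t}\Delta(t)\to 0$, which combined with (a) gives (b).

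For part (c), I plug (a) into the sum to obtain $\sum_k\rho(k)\|\z_i(k+1)-\overline{\x}(k)\| \leq BnM\sum_k\rho(k)\lambda^k + B\sum_k\rho(k)\sum_{t=0}^k\lambda^{k-t}\Delta(t)$. The first piece is bounded by $BnM\rho(0)/(1-\lambda)$. For the convolution piece I swap the order of summation and use monotonicity $\rho(k)\leq\rho(t)$ for $k\geq t$ to reduce it to $\tfrac{B}{1-\lambda}\sum_t\rho(t)\Delta(t)$. Using the bound on $\Delta(t)$ from the previous step, $\rho(t)\Delta(t)\leq G(nb+1)\sum_i\rho(t)\rho(l_i(t))$, and AM--GM reduces the problem to $\sum_t\rho^2(t)<\infty$ (Assumption \ref{as:pcpa}(c)) together with $\sum_t\rho^2(l_i(t))<\infty$. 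The latter is where Lemma \ref{lm:delays}(a) plays a decisive second role: since player $i$ is activated at least once every $b_1$ steps, the non-decreasing counter $l_i$ stays constant for at most $b_1$ consecutive values of $t$, so $\sum_t\rho^2(l_i(t))\leq b_1\sum_s\rho^2(s)<\infty$.

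The main obstacle, as is typical in push-sum analyses on asynchronous digraphs, is the combinatorial step that transfers the $B$-strong connectivity of the originally \emph{intermittently activated} graph (guaranteed by Lemma \ref{lm:delays}(a)) to the \emph{augmented} graph $\widetilde{\mathcal{G}}(k)$, which carries $bn$ extra virtual nodes and whose extra edges depend on the realized delays at each time; once that is secured with the explicit window size $nb$, the rate $\lambda$ and constant $B$ stated in (a) follow from standard bookkeeping, and parts (b) and (c) are then routine consequences of the monotonicity and summability assumptions on $\rho$.
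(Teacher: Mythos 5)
Your proposal follows essentially the same route as the paper: the conservation identity from column-stochasticity of $\widetilde{A}(k)$ (the paper's Lemma \ref{lm:average}), a geometric mixing bound for products of the augmented matrices together with a uniform lower bound on $y_i$ (the paper's Lemma \ref{lm:transMatrix}, imported from the AsySPA paper), and then the standard perturbed push-sum bookkeeping for (a), vanishing effective stepsizes via Lemma \ref{lm:delays}(c) for (b), and the swap-of-summation/AM--GM argument for (c), which the paper itself only sketches by pointing to the Nedi\'c--Olshevsky lemma. Two points deserve care. First, your justification of the mixing bound is stated too loosely: the augmented matrices $\widetilde{A}(k)$ do \emph{not} have positive diagonals at the $bn$ virtual nodes (their columns merely shift mass down the delay chains), so the vanilla time-varying-graph ergodicity argument "strong connectivity over a window plus positive diagonals" does not apply verbatim; this is exactly why the paper invokes the dedicated Lemma \ref{lm:transMatrix}, which also guarantees $y_i(k)\geq n^{-nb}$ only for the real players $i\in\mathcal{N}$ (for virtual nodes $y_i$ may vanish, so the ratio $\z_i$ is only formed for $i\in\mathcal{N}$). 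Second, in part (b) your uniform bound $\|\x_i(t+1)-\x_i(t)\|\leq G\alpha_i(t)$ needs boundedness of $F_i(\x_i(t),\z_i(t+1))$, and since $\z_i(t+1)$ need not lie in $\overline{\X}$ this cannot come from compactness alone; it follows from Lipschitz continuity plus the uniform bound on $\|\z_i(t+1)-\overline{\x}(t)\|$ obtained from part (a) with $\Delta(t)\leq 2nM$, which is precisely the paper's argument, so that dependence on (a) should be made explicit. With these clarifications your derivation is correct and matches the paper's proof in substance, and your explicit treatment of part (c) --- interchanging the sums using monotonicity of $\rho$, AM--GM, and the counter-window bound $\sum_t\rho^2(l_i(t))\leq b_1\sum_s\rho^2(s)$ (where strict increase of $l_i$ per activation, or at least a bounded repetition count, is what is really needed) --- usefully fills in a step the paper delegates to a reference.
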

\begin{proof}
  See Appendix \ref{ap:consensus}.
\end{proof}

The convergence proof of Algorithm \ref{algo:asypa} then directly follows from Proposition \ref{prop:pcpa}.

\begin{pp}\label{prop:asypa}
  Suppose that Assumptions \ref{as:f}-\ref{as:graph} hold and the sequence $\{\rho(t)\}$ is chosen to satisfy Assumption \ref{as:pcpa}(c).
  Then, the sequence $\{\x_i(k)\}$, $i = 1, \cdots, n$, generated by the augmented network formulation \eqref{eq:pushsum} of Algorithm \ref{algo:asypa} converges to the unique NE of the AG in \eqref{eq:prob}.
\end{pp}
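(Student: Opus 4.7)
The plan is to recognize that the update rule in \eqref{eq:pushsum} is precisely an instance of the PCPA in \eqref{eq:pcpa} with a carefully chosen perturbation, so that Proposition \ref{prop:pcpa} applies once Assumption \ref{as:pcpa} is verified. Specifically, I would identify $s(k) \in \mathcal{N}$ with a player activated at $t(k)$ (breaking ties arbitrarily if several players are simultaneously active, which affects only bookkeeping), identify $r_i(k)$ with $l_i(k)$, and set
\begin{equation*}
  \ep_i(k) \coloneqq F_i(\x_i(k), \z_i(k+1)) - F_i(\x_i(k), \overline{\x}(k)).
\end{equation*}
With this choice, $\widehat{F}_i(k) = F_i(\x_i(k), \overline{\x}(k)) + \ep_i(k) = F_i(\x_i(k), \z_i(k+1))$, so the projected update in \eqref{eq:pushsum} coincides with \eqref{eq:pcpa}. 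Moreover, since Algorithm \ref{algo:asypa} sets $l_i \leftarrow l_i' + 1$ after taking $\alpha_i = \sum_{t = l_i}^{l_i'} \rho(t)$, the effective stepsize satisfies $\alpha_i(k) = \sum_{t=l_i(k)}^{l_i(k+1)-1} \rho(t)$, matching the PCPA definition exactly.

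Next I would verify the four parts of Assumption \ref{as:pcpa}. Part (a) follows from Lemma \ref{lm:delays}(a) with $\sigma_1 = b_1$, since each player is activated at least once in any window of length $b_1$. Part (b) follows from Lemma \ref{lm:delays}(c) with $\sigma_2 = nb + 1$, which gives both $|l_i(k) - l_j(k)| \leq nb \leq \sigma_2$ and $\Delta l_i(k) = l_i(k+1) - l_i(k) \leq nb + 1 = \sigma_2$. Part (c) is precisely the hypothesis imposed on $\{\rho(t)\}$ in the statement of Proposition \ref{prop:asypa}. The key point is part (d): using the uniform Lipschitz continuity in Assumption \ref{as:FL}, we bound
\begin{equation*}
  \|\ep_i(k)\| = \|F_i(\x_i(k), \z_i(k+1)) - F_i(\x_i(k), \overline{\x}(k))\| \leq L\,\|\z_i(k+1) - \overline{\x}(k)\|,
\end{equation*}
so that
\begin{equation*}
  \sum_{k=0}^{\infty}\rho(k)\|\ep_i(k)\| \leq L \sum_{k=0}^{\infty}\rho(k)\|\z_i(k+1) - \overline{\x}(k)\| < \infty
\end{equation*}
by Lemma \ref{lm:consensus}(c). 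Note that Lemma \ref{lm:consensus} applies because its hypotheses (Assumptions \ref{as:f}, \ref{as:FL}, \ref{as:graph}, \ref{as:pcpa}(c)) are all in force.

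Having verified Assumption \ref{as:pcpa}, Proposition \ref{prop:pcpa} immediately yields that $\{\x_i(k)\}$ converges to the unique NE, which exists by Proposition \ref{prop:unique}. The main obstacle, to my mind, is not in the logical chain above but rather a matter of careful reconciliation: one must check that the counters and stepsizes generated by Algorithm \ref{algo:asypa} (with its buffers, max operations, and the $l_i \leftarrow l_i' + 1$ update) really do produce the same $\alpha_i(k)$ that appears in the PCPA, and that the convention of splitting simultaneous activations into successive $k$'s does not spoil the bookkeeping in Lemma \ref{lm:delays}. Once this correspondence is made explicit, the remaining arguments are verifications of the assumptions rather than fresh analysis, and the conclusion follows directly from Proposition \ref{prop:pcpa}.
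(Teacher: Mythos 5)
Your overall route is the same as the paper's: reduce \eqref{eq:pushsum} to the PCPA by setting $\ep_i(k) = F_i(\x_i(k), \z_i(k+1)) - F_i(\x_i(k), \overline{\x}(k))$, bound $\|\ep_i(k)\| \leq L\|\z_i(k+1)-\overline{\x}(k)\|$ via Assumption \ref{as:FL}, verify Assumption \ref{as:pcpa}(a)--(b) through Lemma \ref{lm:delays}, verify (d) through Lemma \ref{lm:consensus}(c), and conclude from Proposition \ref{prop:pcpa}. Up to that point the argument matches the paper.

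There is, however, a genuine gap in how you dispose of simultaneous activations. You claim that breaking ties arbitrarily ``affects only bookkeeping,'' and you frame the remaining check as a reconciliation of counters with Lemma \ref{lm:delays}. The real issue is different: the PCPA lets exactly one player update per iteration against the \emph{current} aggregate $\overline{\x}$, so when several players $i_1,\cdots,i_m$ are activated at the same $t(k)$ and you serialize the block into $m$ sub-iterations, the $u$-th sub-update is measured against an aggregate $\overline{\x}^{(u-1)}$ that has already shifted because of the earlier sub-updates, whereas Algorithm \ref{algo:asypa} used (an estimate of) $\overline{\x}(k)$. The perturbation that must satisfy Assumption \ref{as:pcpa}(d) is therefore not your $\ep_i(k)$ but $\ep_i^{(u-1)} = \ep_i(k) + F_i(\x_i(k), \overline{\x}(k)) - F_i(\x_i^{(u-1)}, \overline{\x}^{(u-1)})$, and its $\rho$-weighted summability is not covered by Lemma \ref{lm:consensus}(c) alone. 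This is precisely Case~2 of the paper's proof: by Lipschitz continuity the extra term is bounded by $\frac{L}{n}\sum_{s<u}\|\x_{i_s}(k+1)-\x_{i_s}(k)\| + L\|\x_{i_{u-1}}(k+1)-\x_{i_{u-1}}(k)\|$, each increment satisfies $\|\x_{i_s}(k+1)-\x_{i_s}(k)\| \leq \alpha_{i_s}(k)(C+\|\ep_i(k)\|)$ with $C$ a uniform bound on $F_i$ over the compact set $\X$, and since $\alpha_{i_s}(k)$ is a sum of at most $nb+1$ values of $\rho(\cdot)$, the series $\sum_k \rho(k)\|\ep_i^{(u)}(k)\|$ converges because $\sum_t \rho^2(t) < \infty$. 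Without this step your verification of Assumption \ref{as:pcpa}(d) only covers the case where exactly one player is activated at every $t(k)$; adding it makes your argument coincide with the paper's.
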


\begin{proof}
  Let
  $$ \ep_i(k) = F_i(\x_i(k), \z_i(k + 1)) - F_i(\x_i(k), \overline{\x}(k)).$$
  Under the Lipschitz continuity of $F_i$, we have
  $$ \|\ep_i(k)\| \leq L \|\z_i(k + 1) - \overline{\x}(k)\|.$$
  Thus, it follows from Lemma \ref{lm:consensus} that $\lim_{k\to\infty}\|\ep_i(k)\| = 0$ and $\sumk \rho(k)\|\ep_i(k)\|<\infty$. Let $\mathcal{A}(k)$ denote the set of updating players at time $t(k)$, i.e., $\mathcal{A}(k) = \{i|t(k)\in\mathcal{T}_i, i\in\mathcal{N}\}$. It follows from Algorithm \ref{algo:asypa} that
  \begin{equation}
    \label{eq:asypaAsPcpa}
    \x_i(k+1) = \begin{cases}
      \Pi_{\X_i}\left[\x_i(k) - \alpha_i(k)\widehat{F}_i(k)\right], & \text{if } i\in\mathcal{A}(k), \\
      \x_i(k), & \text{otherwise},
    \end{cases}
  \end{equation}
  where $\alpha_i(k) = \sum_{t=l_i(k)}^{l_i(k+1)-1}\rho(t)$, and the perturbed gradient $\widehat{F}_i(k) = F_i(\x_i(k), \overline{\x}(k)) + \ep_i(k)$. Then, two cases are considered.

  \textbf{Case 1:} If $\mathcal{A}(k)$ is a singleton for all $k \geq 0$, it follows from Proposition \ref{prop:pcpa} that $\x_i(k)$ converges to the NE.

  \textbf{Case 2:} If for some $k$, $\mathcal{A}(k)$ includes multiple elements, say $i_1, \cdots, i_m$, where $m = |\mathcal{A}(k)|\leq n$, we can incrementally select the players via $m$ iterations. 
  
  Specifically, letting $\x^{(0)} = \x(k)$ and $\x^{(m)} = \x(k+1)$, we can rewrite \eqref{eq:asypaAsPcpa} as
  \begin{equation*}
    \x_i^{(u)} = \begin{cases}
      \Pi_{\X_i}\left[\x_i^{(u-1)} - \alpha_i(k)\widehat{F}_i^{(u-1)}\right], & \text{if } i = i_u \\
      \x_i^{(u-1)}, & \text{otherwise}
    \end{cases}
  \end{equation*}
  for $u = 1, \cdots, m$, where
  $$
    \begin{aligned}
      \widehat{F}_i^{(u-1)} &= F_i(\x_i^{(u-1)}, \overline{\x}^{(u-1)}) + \ep_i^{(u-1)}, \\
      \ep_i^{(u-1)} &= \ep_i(k) + F_i(\x_i(k), \overline{\x}(k)) - F_i(\x_i^{(u-1)}, \overline{\x}^{(u-1)}).
    \end{aligned}
   $$
This implies that
  $$
    \textstyle
    \begin{aligned}
      & \|F_i(\x_i(k), \overline{\x}(k)) - F_i(\x_i^{(u-1)}, \overline{\x}^{(u-1)})\| \\
      & \leq \|F_i(\x_i(k), \overline{\x}(k)) - F_i(\x_i(k), \overline{\x}^{(u-1)})\| + \|F_i(\x_i(k), \overline{\x}^{(u-1)}) - F_i(\x_i^{(u-1)}, \overline{\x}^{(u-1)})\| \\
      & \leq \frac{L}{n} \sum_{s=1}^{u-1}\|\x_{i_s}(k+1) - x_{i_s}(k)\| + L \|\x_{i_{u-1}}(k+1) - x_{i_{u-1}}(k)\|
    \end{aligned}
  $$
  for $u = 2, \cdots, m$. Moreover, it holds that
  $$
      \|\x_{i_s}(k+1) - \x_{i_s}(k)\| \leq \alpha_{i_s}(k)\|\widehat{F}_i^{(s-1)}\| \leq \alpha_{i_s}(k)(C + \ep_i(k)),
  $$
  where
  $\textstyle C = \max_{\x\in \mathcal{X}} \max_{i \in \mathcal{N}}F_i(\x_i, \overline{\x})<\infty$
  due to the compactness of $\mathcal{X}$.
  Thus, it is easy to verify that $\sumk\rho(k)\|\sum_u\ep_i^{(u)}(k)\| < \infty$. 
  Finally, the convergence of Algorithm \ref{algo:asypa} follows from Proposition \ref{prop:pcpa} again.
\end{proof}

For the Asy-NAGs with strictly monotone pseudo-gradients, it does not seem to be possible to explicitly evaluate the convergence rate of Algorithm \ref{algo:asypa}, and thus we resort to numerical experiments.
In the next section, numerical results illustrate that such a rate is related to network connectivity and the level of asynchronicity.
\begin{rem} It is interesting to note that the convergence in Proposition 3 is in the deterministic sense, rather than in the stochastic sense as in \citet{koshalDistributedAlgorithmsAggregative2016, salehisadaghianiDistributedNashEquilibrium2016, cenedeseAsynchronousDistributedScalable2020}. 
\end{rem}

%%%%%%%%%%%%%%%%%%%%%%%%%%%%%%%%%%%%%%%%%%%%%%%%%%%%%%%%%%%%%%%%%%%%%%%%%%%%%%%%
\section{Numerical Experiments}
\label{sec:numeric}
In this section, we validate the performance of Algorithm \ref{algo:asypa} on networked Nash-Cournot games.
The convergence to the NE is confirmed under different network topologies with different numbers of players.
We also compare Algorithm \ref{algo:asypa} with its synchronous counterpart. Moreover, we validate the advantages of using the aggressive stepsize in Line (b) of Algorithm \ref{algo:asypa}. 

The Nash-Cournot game model is adopted from \citet{koshalDistributedAlgorithmsAggregative2016}. Specifically, consider $n$ firms competing over $L$ markets. Let firm $i$'s production and sales at market $l$ be denoted by $g_{il}$ and $s_{il}$, respectively, while its cost of production at market $l$ is $c_{il}(g_{il})$ and defined as
$$
c_{il}(g_{il}) = a_{il}g_{il} + b_{il}g_{il}^2,
$$
where $a_{il}$ and $b_{il}$ are positive parameters for firm $i$.
The revenue of firm $i$ at market $l$ is $p_{l}(s_l)s_{il}$, where $p_{l}(s_l)$ denotes the sale price at market $l$ and $s_{l} = \sumi s_{il}$ is the total sales at market $l$.
The sale price function captures the inverse demand function and is defined as
$$
  p_{l}(s_l) = d_l - s_l,
$$
where $d_l$ is the overall demand at market $l$. Firm $i$'s production at market $l$ is capacitated by $\tcap_{il}$. The transportation costs between any two markets are set to zero. Let $\x_{il} = (g_{il}, s_{il})$ for all $l = 1, \cdots, L$,  $\x_i = (\x_{i1}, \cdots, \x_{iL})^\T$, and $\x = (\x_1^\T, \cdots, \x_n^\T)^\T$, firm $i$'s optimization problem is then given by the following:
$$
  \begin{aligned}
    \text{minimize} &\quad f_i(\x_i, \overline{\x}) = \sum_{l=1}^L \left(c_{il}(g_{il}) - s_{il}\cdot p_l(s_l)\right) \\
    \text{subject to} &\quad g_{il},s_{il}\geq 0,\ g_{il}\leq \text{cap}_{il}, \\
    &\quad \sum_{l=1}^L g_{il} = \sum_{l=1}^L s_{il}.
  \end{aligned}
$$
Let $\X_i$ denote the constraint set for firm $i$.
Obviously, for each $i$, both $\X_i$ and $f_i$ satisfy Assumption \ref{as:f}, and  the pseudo-gradient mapping $\phi$ satisfies Assumptions \ref{as:mono} and \ref{as:FL}.
Thus, it follows from Proposition \ref{prop:unique} that the Nash-Cournot game admits a unique NE.
Besides, $\phi$ is also \textit{strongly monotone}\footnote{In the experiments, let $\rho(t)=\rho$ to achieve a linear convergence rate, which is to be studied in the future work.}, i.e., there exists a constant $\mu > 0$ such that for all $\x, \x' \in \X$,
\begin{equation*}
  (\x - \x')^\T(\phi(\x) - \phi(\x')) \geq \mu \|\x - \x'\|^2.
\end{equation*}
The Nash-Cournot game consists of $n \in \{5, 10, 20, 30\}$ firms over $L = 10$ markets. The parameters $a_{il}$, $b_{il}$, and $d_l$ are drawn from a uniform distribution. Specifically, for all $i\in \mathcal{N}$ and $l\in\{1, \cdots, L\}$, we set $a_{il} \sim U(2, 12)$, $b_{il} \sim U(2, 3)$, where $U(u_1, u_2)$ denotes the uniform distribution over an interval $[u_1, u_2]$. The demand $d_l$ is drawn from $U(90, 100)$. Moreover, we set the production capacities as $\tcap_{il} = 500$.

We adopt the Message Passing Interface (MPI) on a multi-core server to simulate the P2P network.
Particularly, the MPI uses $n$ cores to denote the players, and the communication is performed between neighboring cores in the predefined network.
To simulate a heterogeneous environment, the computation time of player $i$ is sampled from an exponential distribution $\text{exp}(1 / \mu_i) \si{\ms}$.
For player $i$, $\mu_i$ is set as $1 + |\overline{\mu}_i|$ where $\overline{\mu}_i$ follows the standard normal distribution $N(0, 5^2)$.
The information delays are sampled from $\text{exp}(1/5) \si{\ms}$.

\begin{figure}[tbp]
  \centering
  \subfloat[Cycle]{\includegraphics[width=.15\linewidth]{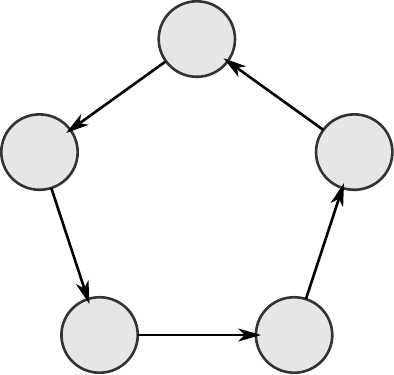}}
  \hfill
  \subfloat[Star]{\includegraphics[width=.15\linewidth]{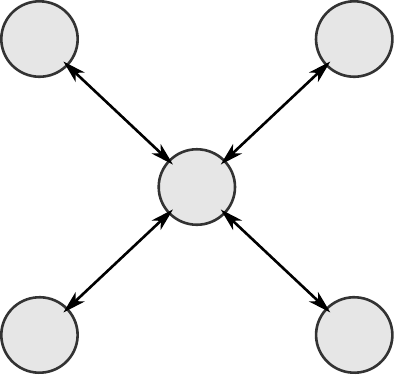}}
  \hfill
  \subfloat[Log]{\includegraphics[width=.15\linewidth]{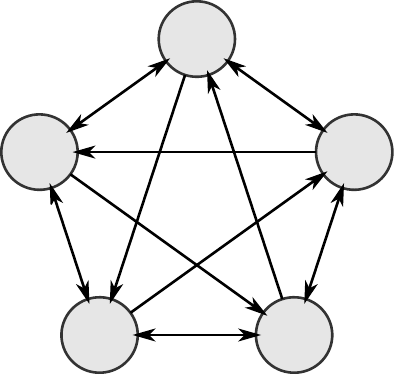}}
  \hfill
  \subfloat[Complete]{\includegraphics[width=.15\linewidth]{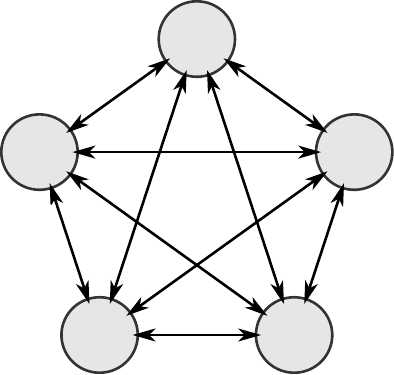}}
  \caption{An illustration of network structures used in experiments.}
  \label{fig:topos}
\end{figure}

In the figures to follow, we plot the trajectories of the normalized suboptimality gap $$\frac{\|\x(t) - \x^*\|_{\infty}}{\|\x^*\|_{\infty}}.$$
The curves are averaged over 50 Monte-Carlo simulations with randomly initiated points and sampled delays.

Firstly, we consider an Asy-NAG with $n = 20$ players over four network structures which are described below and illustrated in Figure \ref{fig:topos}.
\begin{itemize}
  \item \textit{Cycle}: every player has only one in-neighbor and one out-neighbor.
  \item \textit{Log}: player $i$ sends information to players $\text{mod}(2^j + i, n) + 1$, $0 \leq j < \log_2n$.
  \item \textit{Star}: a player is able to send and receive information from any other player.
  \item \textit{Complete}: every player sends information to every other player.
\end{itemize}
Figure \ref{fig:numerics}(a) confirms the convergence of Algorithm \ref{algo:asypa} to the NE over different network structures.
Specifically, the convergence rate over the \textit{complete} network is the fastest while that over the \textit{cycle} is the slowest, which coincides with the fact that the \textit{complete} network has the best connectivity.
Note that although the \textit{star} network seems denser than the \textit{log} one when $n = 20$, the algorithm is bottlenecked by the central player, thus the convergence rate is slower.

Secondly, we consider Asy-NAGs with $n = 5, 10, 20, 30$ players over the \textit{log} network.
Figure \ref{fig:numerics} demonstrates the convergence of Algorithm \ref{algo:asypa}.
The asynchronicity of the Asy-NAG increases with the number of players, which decelerates the convergence rate of the estimated aggregate action.
Thus, Algorithm \ref{algo:asypa} requires more time to converge when the number of players increases.

\begin{figure}[tbp]
  \centering
  \subfloat[]{\includegraphics[width=.4\linewidth]{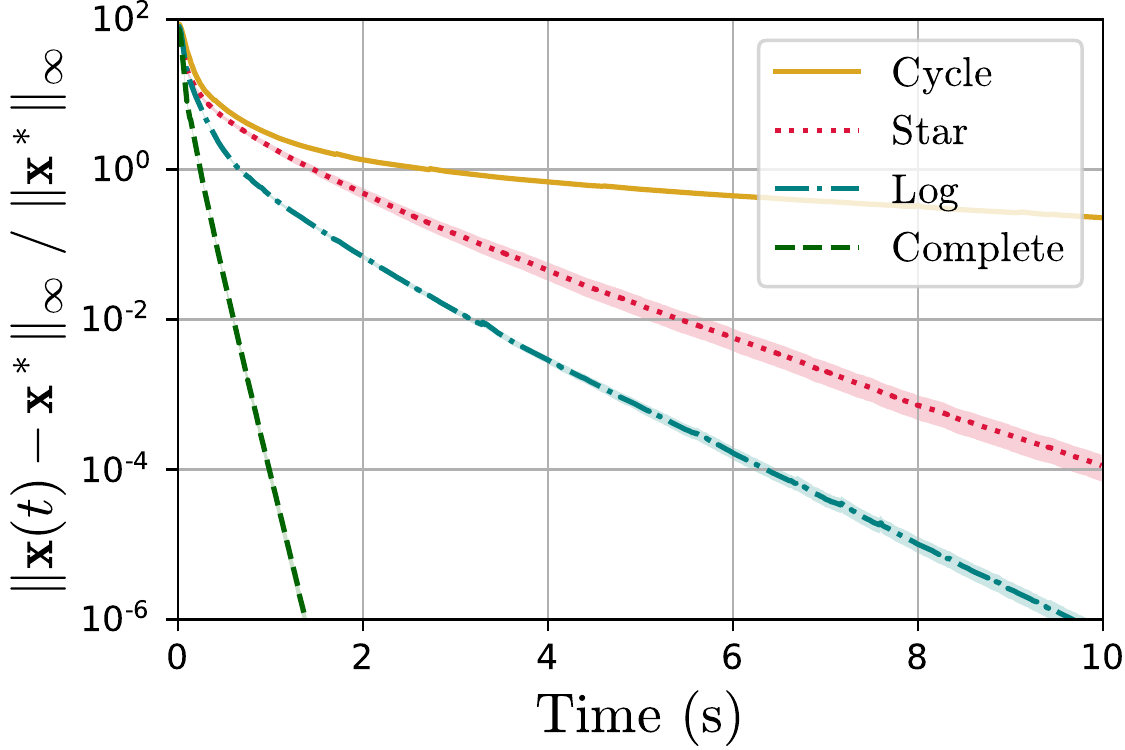}} \hfill
  \subfloat[]{\includegraphics[width=.4\linewidth]{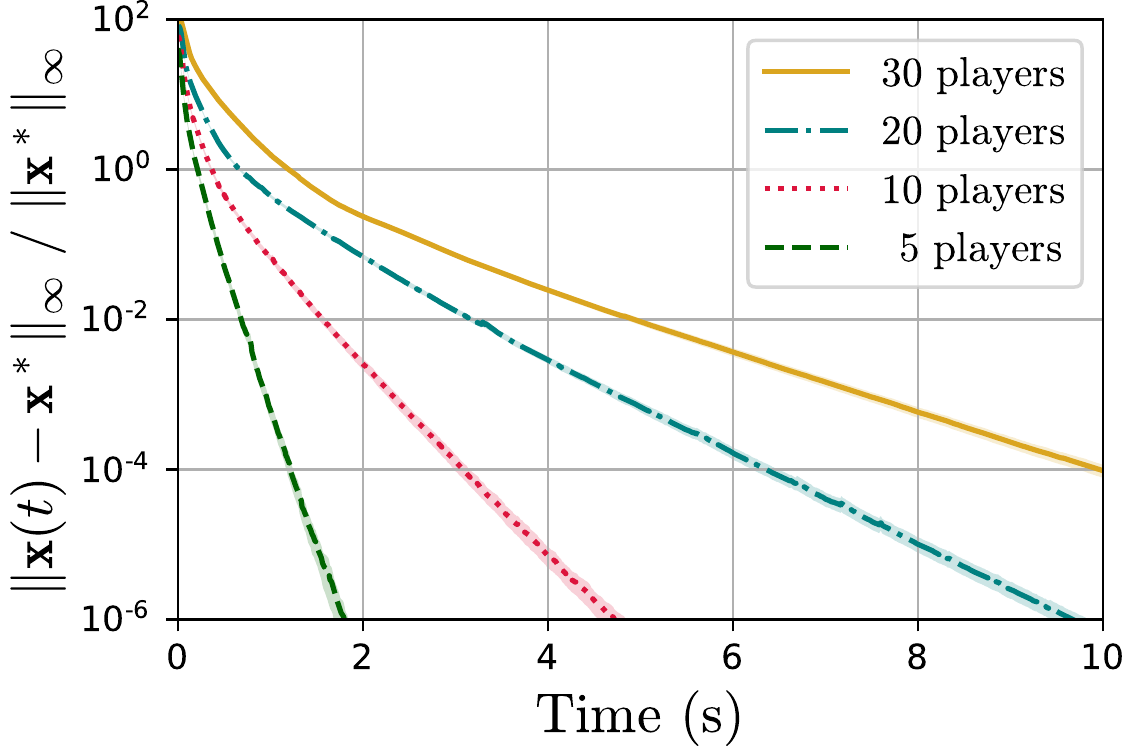}}
  \caption{The suboptimality gap versus the running time. (a) The trajectories of the suboptimality gap for Algorithm \ref{algo:asypa} with 20 players over different network structures. (b) The trajectories of the suboptimality gap for Algorithm \ref{algo:asypa} with different numbers of players over the \textit{log} network.}
  \label{fig:numerics}
\end{figure}

Finally, we show the effectiveness of the aggressive stepsize scheme over the \textit{log} network with $n = 20$ players in Figure \ref{fig:compare}.
While Algorithm \ref{algo:asypa} converges faster than the one without an aggressive stepsize scheme, both algorithms outperform their synchronous version.

\begin{figure}[tbp]
  \centering
  \includegraphics[width=.4\linewidth]{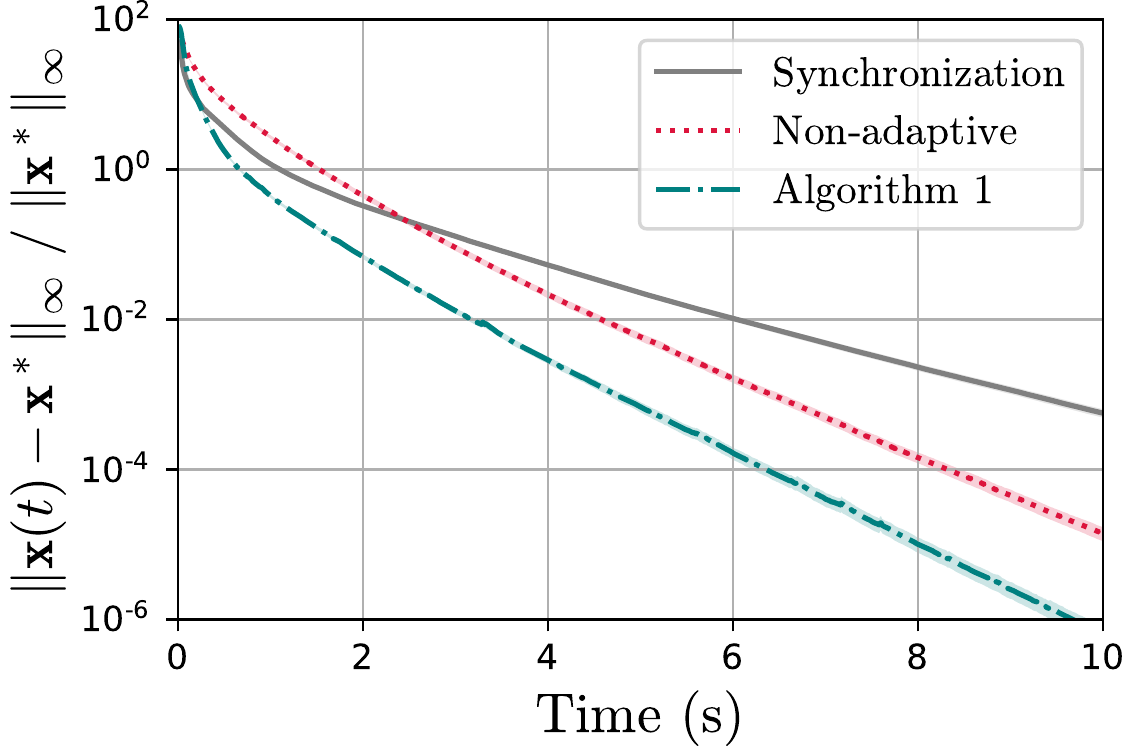}
  \caption{The suboptimality gap versus the running time. Comparison of Algorithm \ref{algo:asypa}, the asynchronous algorithm without an aggressive stepsize scheme (Non-adaptive), and the synchronous algorithm with 20 players over the \textit{log} network.}
  \label{fig:compare}
\end{figure}

%%%%%%%%%%%%%%%%%%%%%%%%%%%%%%%%%%%%%%%%%%%%%%%%%%%%%%%%%%%%%%%%%%%%%%%%%%%%%%%%
\section{Conclusion}
\label{sec:conclusion}

In this paper, we have introduced Asy-NAG, which is challenging due to the full asynchrony of computation and communication.
For the AG, we have proposed a distributed algorithm and proved its convergence to a Nash equilibrium.
It leverages two key ideas: an asynchronous push-sum algorithm to dynamically track the aggregate action, and an aggressive stepsize scheme to facilitate the convergence.
Moreover, we have developed an augmented system approach to address the asynchronicity and designed a novel perturbed coordinated algorithm to help prove the convergence.
Theoretical results have been validated by numerical experiments.

Future work will consider the algorithm design for the Asy-NAGs with linear coupling constraints and the fully asynchronous non-aggregative games. The linear convergence rate for the Asy-NAGs with strongly monotone pseudo-gradients is also an open problem.

\appendix

\section{Proof of Proposition \ref{prop:pcpa}}
\label{ap:pcpa}
Before the proof of Proposition \ref{prop:pcpa}, we state and prove a lemma on the convergence of a deterministic sequence. 
\begin{lm}
  \label{lm:msct}
  Let $\{x_k\}$, $\{u_k\}$, $\{w_k\}$ and $\{v_k\}$ be four scalar sequences such that
  \begin{equation}\label{eq:sct}
  x_{k + 1} \leq x_k - u_k + w_k + v_k, \quad k = 0, 1, \cdots
  \end{equation}
  where $\{x_k\}$, $\{u_k\}$, $\{w_k\}$ are nonnegative, $\sumk w_k < \infty$ and $\sumk v_k$ exists. Then, $\{x_k\}$ converges and $\sumk u_k \leq \infty$.
\end{lm}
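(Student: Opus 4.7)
The plan is to treat this as a deterministic analog of the Robbins--Siegmund supermartingale lemma, and to prove both conclusions by constructing an auxiliary sequence that is monotone and bounded below.

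First, I would derive $\sum_{k=0}^\infty u_k < \infty$ directly by a telescoping argument. Summing \eqref{eq:sct} from $k=0$ to $N-1$ and using $x_N \geq 0$ and $u_k \geq 0$ yields
\[
\sum_{k=0}^{N-1} u_k \;\leq\; x_0 \;+\; \sum_{k=0}^{N-1} w_k \;+\; \sum_{k=0}^{N-1} v_k.
\]
The right-hand side has a finite limit as $N \to \infty$, since $\sum_k w_k$ converges (absolutely, as $w_k\geq 0$) and $\sum_k v_k$ converges by hypothesis. Monotonicity of the partial sums of $u_k$ then gives a finite limit.

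Next, for convergence of $\{x_k\}$, I would introduce the auxiliary sequence
\[
y_k \;:=\; x_k \;+\; \sum_{j=k}^{\infty} w_j \;-\; \sum_{j=0}^{k-1} v_j,
\]
which is well-defined since the tail of the $w$-series and the partial sums of the $v$-series are finite. A direct calculation gives
\[
y_{k+1} - y_k \;=\; (x_{k+1} - x_k) - w_k - v_k \;\leq\; -u_k \;\leq\; 0,
\]
so $\{y_k\}$ is nonincreasing. It is also bounded below: $x_k \geq 0$, $\sum_{j=k}^{\infty} w_j \geq 0$, and $-\sum_{j=0}^{k-1} v_j$ converges to the finite value $-\sum_{k=0}^{\infty} v_k$, so the sequence $-\sum_{j=0}^{k-1} v_j$ is bounded. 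Therefore $\{y_k\}$ has a finite limit. Rearranging,
\[
x_k \;=\; y_k \;-\; \sum_{j=k}^{\infty} w_j \;+\; \sum_{j=0}^{k-1} v_j,
\]
and each term on the right converges as $k \to \infty$ (to $\lim_k y_k$, $0$, and $\sum_{k=0}^{\infty} v_k$, respectively), which proves $\{x_k\}$ converges.

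The only delicate point is that $\sum_k v_k$ is assumed to converge but not necessarily absolutely; for this reason I would avoid taking absolute values of $v_k$ and instead work with its partial sums directly, which keeps every quantity in the construction well-defined. With this choice made, the remainder is a short monotone-convergence argument, and I do not expect any genuine obstacle.
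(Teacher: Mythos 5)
Your proof is correct. The summability claim is handled exactly as in the paper: sum the recursion up to $N-1$, use $x_N \geq 0$, and let $N \to \infty$, so that part coincides. For the convergence of $\{x_k\}$ you take a genuinely different route. The paper (following Proposition A.4.4 of Bertsekas) works directly with $x_k$: it bounds $\limsup_{k\to\infty} x_k \leq x_{\bar k} + \sum_{l\geq \bar k} w_l + \sum_{l\geq \bar k} v_l$, lets $\bar k \to \infty$ so the vanishing tails yield $\limsup_{k} x_k \leq \liminf_{\bar k} x_{\bar k}$, and concludes convergence by a limsup/liminf squeeze. You instead absorb the perturbations into the auxiliary sequence $y_k = x_k + \sum_{j\geq k} w_j - \sum_{j=0}^{k-1} v_j$, show it is nonincreasing and bounded below (correctly using only partial sums of $v$, since $\sum_k v_k$ need not converge absolutely), and recover $x_k$ as a sum of three convergent terms. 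Both arguments are elementary and of comparable length; yours is the quasi-Fej\'er-style ``monotonize by adding tails'' trick and has the minor bonus that summing $y_{k+1}-y_k \leq -u_k$ would give $\sum_k u_k < \infty$ for free, making your separate telescoping step optional, while the paper's version avoids introducing any auxiliary object.
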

\begin{proof}The proof is motivated by that of   \citet[Proposition A.4.4]{bertsekas2015convex}. Using the nonnegativity of $\{u_k\}$, we have
 \begin{equation*}
   x_{k+1} \leq x_k + w_k + v_k \leq x_{\bar{k}} + \sum_{l=\bar{k}}^k (w_l + v_l),
 \end{equation*}
 where $\bar{k} \leq k$. By taking upper limit of both sides as $k\to\infty$, we have that
 \begin{equation}\label{appdne}
   \limsup_{k\to\infty}x_k \leq x_{\bar{k}} + \sum_{l=\bar{k}}^{\infty}w_l + \sum_{l=\bar{k}}^{\infty}v_l.
 \end{equation}
 Since $\sumk w_k < \infty$ and $\sumk v_k$ exists, it follows that the left-hand side of \eqref{appdne} is bounded above and that $\lim_{\bar{k}\to\infty}\sum_{l=\bar{k}}^{\infty}v_l = 0$. By taking the lower limit of the right-hand side (RHS) as $\bar{k}\to\infty$, we have
 \begin{equation*}
   0 \leq \limsup_{k\to\infty}x_k \leq \liminf_{\bar{k}\to\infty}x_{\bar{k}} < \infty.
 \end{equation*}
 This implies that $\{x_k\}$ converges to a finite value. By writing \eqref{eq:sct} for the index $k$ set to $0, \cdots, k$, and adding, we have
 \begin{equation*}
   \sum_{l=0}^k u_k \leq x_0 + \sum_{l=0}^k w_l + \sum_{l=0}^k v_l - x_{k+1}.
 \end{equation*}
 Thus, by taking the limit as $k\to\infty$, we have $\sum_{l=0}^{\infty}u_l<\infty$.
\end{proof}

{\em Proof of Proposition \ref{prop:pcpa}}: To simplify notations, let $\sigma = \sigma_1 + \sigma_2$ and $\widetilde{r}(k) = \max_{j\in \mathcal{N}} r_j(k)$.
It follows from Assumption \ref{as:pcpa} that every player updates at least once in $\sigma$ iterations and $\widetilde{r}(k+t) - \tilr(k) \leq t \sigma_2 \leq t\sigma$ for any integer $t \geq 0$.
Let $I_i^k$ be an indicator function which takes value 1 when $s(k) = i$ and 0 otherwise.
We abuse the notation of stepsize by $\alpha_i(k) = I_i^k\sum_{t=r_i(k)}^{r_i(k+1)-1} \rho(t)$. Then, for all $i \in \mathcal{N}$,
\begin{equation}
  \label{eq:step1}
  \begin{aligned}
  \|\x_i(k+\sigma) - \x_i^*\|^2 
  &= \left\|\x_i(k) - \x_i^* + \sum_{u=k}^{k+\sigma-1}[\x_i(u+1) - \x_i(u)]\right\|^2 \\
  &= \|\x_i(k) - \x_i^*\|^2 + \left\|\x_i(k+\sigma) - \x_i(k)\right\|^2 \\
  & \quad + 2\sum_{u=k}^{k+\sigma-1} [\x_i(k) - \x_i^*]^\T[\x_i(u+1) - \x_i(u)].
  % &= \|\x_i(k) - \x_i^*\|^2 + \widetilde{\rho}_{1i}(k) \\
  % & \quad + 2[\x_i(k) - \x_i^*]^T\left[\sum_{u=k}^{k+\sigma-1} I_i^u[\x_i^+(u) - \x_i(u)]\right].
  \end{aligned}
\end{equation}
Since the action set $\mathcal{X}_i$ is compact and the mapping $F_i$ is continuous, $\|F_i\|$ is bounded.
Besides, the perturbation $\|\ep_i\|$ is bounded under Assumption \ref{as:pcpa}.
Thus, the perturbed mapping $\|\F_i\|$ is bounded. Let $C$ denote its upper bound; then we have that for $u = k+1, \cdots, k + \sigma$,
\begin{equation*}
  \label{eq:xMinus}
  \begin{aligned}
  \|\x_i(u) - \x_i(k)\| = \left\|\sum_{v=k}^{u-1} [\x_i(v + 1) - \x_i(v)]\right\| \leq C\sum_{t=r_i(k)}^{r_i(u)-1} \rho(t) \leq C\sum_{t=\tilr(k)-\sigma}^{\tilr(k+\sigma)-1} \rho(t).
  \end{aligned}
\end{equation*}
Thus, it holds that
\begin{equation*}
  \begin{aligned}
  \widetilde{\rho}_{1i}(k) \coloneqq \|\x_i(k+\sigma) - \x_i(k)\|^2 &\leq C^2\left[\sum_{t=\tilr(k)-\sigma}^{\tilr(k+\sigma)-1} \rho(t)\right]^2 \leq C^2\sigma(\sigma+1)\sum_{t=\tilr(k)-\sigma}^{\tilr(k+\sigma)-1} \rho^2(t).
  \end{aligned}
\end{equation*}
It follows from Assumption \ref{as:pcpa}(a) that $\tilr(k+\sigma^2 + \sigma) - \sigma \geq \tilr(k + \sigma)$. 

Taking summation over $k$, it yields that
\begin{equation*}
  \begin{aligned}
  \sumk\widetilde{\rho}_{1i}(k) \leq C^2\sigma(\sigma+1)\sumk\sum_{t=\tilr(k)-\sigma}^{\tilr(k+\sigma)-1} \rho(t)^2 \leq C^2\sigma^2(\sigma+1)^2\sum_{t=0}^{\infty}\rho(t)^2 < \infty.
  \end{aligned}
\end{equation*}

Next, consider the last term of the RHS of \eqref{eq:step1}, we obtain that
\begin{equation}
  \label{eq:crossTerm}
  \begin{aligned}
  & \sum_{u=k}^{k+\sigma-1} [\x_i(k) - \x_i^*]^\T [\x_i(u + 1) - \x_i(u)] \\
  &\leq \sum_{u=k}^{k+\sigma-1} [\x_i(u) - \x_i^*]^\T[\x_i(u + 1) - \x_i(u)] + \sum_{u=k}^{k+\sigma-1} \|\x_i(k) - \x_i(u)\|\|\x_i(u + 1) - \x_i(u)\|.
  \end{aligned}
\end{equation}
Denote the last term of the RHS of \eqref{eq:crossTerm} by $\widetilde{\rho}_{2i}(k)$. 
Similarly, we have $\sumk \widetilde{\rho}_{2i}(k) < \infty$. Consider the first term, when $I_i^u = 1$; we then have that
\begin{equation}
  \label{eq:step22}
  \begin{aligned}
  & [\x_i(u) - \x_i^*]^\T[\x_i(u + 1) - \x_i(u)] \\
  &= -\|\x_i(u + 1) - \x_i(u)\|^2 + [\x_i(u + 1) - \x_i^*]^\T[\x_i(u + 1) - \x_i(u) + \alpha_i(u)\widehat{F}_i(u)] \\
  & \quad + \alpha_i(u)[\x_i(u) - \x_i(u + 1)]^\T\widehat{F}_i(u) - \alpha_i(u)[\x_i(u) - \x_i^*]^\T\widehat{F}_i(u) \\
  &\leq - \alpha_i(u)[\x_i(u) - \x_i^*]^\T\widehat{F}_i(u) + \alpha_i(u)\|\x_i(u) - \x_i(u + 1)\|\|\widehat{F}_i(u)\|,
  \end{aligned}
\end{equation}
where the inequality follows from the projection theorem \citep[Proposition 1.1.4]{bertsekas2016nonlinear}.
Similarly, let $\widetilde{\rho}_{3i}(k)$ denote the last term of \eqref{eq:step22}, and we have $\sumk \widetilde{\rho}_{3i}(k) < \infty$.

Considering the first term of the RHS of \eqref{eq:step22}, we obtain
\begin{equation}
  \label{eq:mono_term}
  \begin{aligned}
    &-\alpha_i(u)[\x_i(u) - \x_i^*]^\T\F_i(u) \\
    &= -\alpha_i(u)[\x_i(u) - \x_i(k)]^\T\F_i(u) -\alpha_i(u)[\x_i(k) - \x_i^*]^\T[\F_i(u) - F_i(u)] \\
    & \quad -\alpha_i(u)[\x_i(k) - \x_i^*]^\T[F_i(u) - F_i(k)] -\alpha_i(u)[\x_i(k) - \x_i^*]^\T[F_i(k) - F_i^*] \\
    & \quad -\alpha_i(u)[\x_i(k) - \x_i^*]^\T F_i^*\\
    &\leq  -\alpha_i(u)[\x_i(k) - \x_i^*]^\T[F_i(k) - F_i^*] \\
    & \quad + C\alpha_i(u)\|\x_i(u) - \x_i(k)\| + 2M\alpha_i(u)\|F_i(u) - F_i(k)\| + 2M\alpha_i(u)\|\ep_i(u)\|,
  \end{aligned} 
\end{equation}
where we have used $F_i(u)$ and $F_i^*$ for $F_i(\x_i(u), \overline{\x}(u))$ and $F_i(\x_i^*, \overline{\x}^*)$ for simplicity, and $M = \max_{i \in \mathcal{N}, \x\in \X_i}\|\x\|$.
% The inequality follows from \eqref{eq:optimal_i} and Cauchy-Schwarz inequality. % Todo: cite
Note that with the Lipschitz continuity of the mapping $F_i$, we have that
\begin{equation*}
  \begin{aligned}
    &\|F_i(u) - F_i(k)\| \\
    & \leq \|F_i(\x_i(u), \overline{\x}(u)) - F_i(\x_i(u), \overline{\x}(k))\| + \|F_i(\x_i(u), \overline{\x}(k)) - F_i(\x_i(k), \overline{\x}(k))\| \\
    &\leq \frac{L}{n} \sum_{j=1}^n\|\x_j(u) - \x_j(k)\| + L\|\x_i(u) - \x_i(k)\|.
  \end{aligned}
\end{equation*}
Thus, denoting by $\widetilde{\rho}_{4i}(k)$ the summation over $u$ of the last three terms of \eqref{eq:mono_term}, we have $\sumk \widetilde{\rho}_{i4}(k) < \infty$.

Finally, we evaluate the summation over $i$ and $u$ of the first term of the RHS of \eqref{eq:mono_term}. 

By the definition of $I_i^u$ and $\alpha_i(u)$, we know that
\begin{equation*}
  \sumi\sum_{u=k}^{k+\sigma-1}\alpha_i(u) = \sum_{i=1}^n \sum_{t=r_i(k)}^{r_i(k+\sigma)-1}\rho(t).
\end{equation*}
Then, it implies that
\begin{equation}
  \label{eq:step24}
  % \begin{medsize}
  \begin{aligned}
    &\sumi\sum_{u=k}^{k+\sigma-1}-\alpha_i(u)[\x_i(k) - \x_i^*]^\T[F_i(k) - F_i^*] \\
    &= \sum_{i=1}^n\left[-\sum_{t=r_i(k)}^{\widetilde{r}(k)-1} - \sum_{t=\widetilde{r}(k)}^{\widetilde{r}(k+\sigma)-1} + \sum_{t=r_i(k+\sigma)}^{\widetilde{r}(k+\sigma)-1}\right]
      \times \rho(t)[\x_i(k) - \x_i^*]^\T[F_i(k) - F_i^*] \\
    & \leq -\sum_{t=\widetilde{r}(k)}^{\widetilde{r}(k+\sigma)-1} \rho(t)[\x(k) - \x^*]^\T[\phi(\x(k)) - \phi(\x^*)]
      - \sum_{i=1}^{n}\sum_{t=r_i(k)}^{\widetilde{r}(k)-1}\rho(t)[\x_i(k) - \x_i^*]^\T[F_i(k) - F_i^*] \\
    & \quad + \sum_{i=1}^{n}\sum_{t=r_i(k+\sigma)}^{\widetilde{r}(k+\sigma)-1} \rho(t)[\x_i(k+\sigma) - \x_i^*]^\T[F_i(k+\sigma) - F_i^*] \\
    & \quad + \sum_{i=1}^{n}\sum_{t=r_i(k+\sigma)}^{\widetilde{r}(k+\sigma)-1}\rho(t)\Big\{\|\x_i(k) - \x_i(k+\sigma)\|\|F_i(k) - F_i^*\| + \|\x_i(k+\sigma) - \x_i^*\|\|F_i(k) - F_i(k+\sigma)\|\Big\} \\
    & = -\sum_{t=\widetilde{r}(k)}^{\widetilde{r}(k+\sigma)-1} \rho(t)[\x(k) - \x^*]^\T[\phi(\x(k)) - \phi(\x^*)] - \delta(k) + \delta(k+\sigma) + \widetilde{\rho}_5(k).
  \end{aligned}
  % \end{medsize}
\end{equation}
Similarly, we may obtain that $\sum_{k=0}^{\infty} \widetilde{\rho}_5(k) < \infty$.
Besides,
\begin{equation*}
  \begin{aligned}
    \delta(k) &= \sum_{i=1}^{n}\sum_{t=r_i(k)}^{\widetilde{r}(k)-1}\rho(t)[\x_i(k) - \x_i^*]^\T[F_i(k) - F_i^*], \\
    |\delta(k)| &\leq \sum_{i=1}^{n}\sum_{t=\widetilde{r}(k)-\sigma}^{\widetilde{r}(k)-1}2C\rho(t)\|\x_i(k)-\x_i^*\| \leq 4nMC\sigma \rho(\widetilde{r}(k) - \sigma),
  \end{aligned}
\end{equation*}
which implies that $\delta(k)$ is bounded and $\lim_{k\to\infty}\delta(k) = 0$. Thus, it immediately holds that
\begin{equation*}
  \sumk(\delta(k+\sigma)-\delta(k)) = -\sum_{k=0}^{\sigma-1}\delta(k).
\end{equation*}

Finally, combining \eqref{eq:step1}, \eqref{eq:crossTerm}, \eqref{eq:step22}, \eqref{eq:mono_term} and \eqref{eq:step24} yields the following key inequality
\begin{equation}
  \label{eq:keyx}
  \begin{aligned}
    \|\x(k+\sigma)-\x^*\|^2 \leq \|\x(k)-\x^*\|^2 - 2\sum_{t=\widetilde{r}(k)}^{\widetilde{r}(k+\sigma)-1} \rho(t)[\x(k) - \x^*]^\T[\phi(\x(k)) - \phi(\x^*)] + \widetilde{\rho}(k) + \widetilde{\delta}(k),
  \end{aligned}
\end{equation}
where $\widetilde{\rho}(k) = \sumi\big[\widetilde{\rho}_{1i}(k) + 2\sum_{j=2}^{4}\widetilde{\rho}_{ji}(k)\big]+2\widetilde{\rho}_5(k)$ and $\widetilde{\delta}(k) = 2[\delta(k+\sigma)-\delta(k)]$ satisfying that
\begin{equation*}
  \begin{aligned}
    \widetilde{\rho}(k) > 0, \quad \sumk\widetilde{\rho}(k) < \infty, \quad \sumk \widetilde{\delta}(k) \text{ exists}.
  \end{aligned}
\end{equation*}

Letting $\y(k) = \x(k\sigma)$, it follows from \eqref{eq:keyx} that
\begin{equation}
  \label{eq:keyy}
  \begin{aligned}
    \|\y(k+1)-\x^*\|^2 \leq \|\y(k)-\x^*\|^2 - 2\sum_{t=\widetilde{r}(k\sigma)}^{\widetilde{r}((k+1)\sigma)-1} \rho(t)[\y(k) - \x^*]^\T[\phi(\y(k)) - \phi(\x^*)] + \widetilde{\rho}'(k) + \widetilde{\delta}'(k),
  \end{aligned}
\end{equation}
where $\widetilde{\rho}'(k) = \widetilde{\rho}(k\sigma)$, and $\widetilde{\delta}'(k) = \widetilde{\delta}(k\sigma)$. We still have
\begin{equation*}
  \begin{aligned}
    \widetilde{\rho}'(k) > 0, \quad \sumk\widetilde{\rho}'(k) < \infty, \quad \sumk \widetilde{\delta}'(k) \text{ exists}.
  \end{aligned}
\end{equation*}
As \eqref{eq:keyy} satisfies the conditions of Lemma \ref{lm:msct}, it follows that $\{\|\y(k) - \x^*\|\}$ is a convergent sequence and
\begin{equation*}
  \sumk \sum_{t=\widetilde{r}(k\sigma)}^{\widetilde{r}((k+1)\sigma)-1} \rho(t)[\y(k) - \x^*]^\T[\phi(\y(k)) - \phi(\x^*)] < \infty.
\end{equation*}

With similar statements as those in \citet[Proposition 2]{koshalDistributedAlgorithmsAggregative2016}, the sequence $\{\y(k)\}$ converges to $\x^*$. Together with the fact that $\lim_{t\to\infty}\rho(t)=0$, it implies that $\{\x(k)\}$ converges to $\x^*$. \qed

\section{Proof of Lemma \ref{lm:consensus}}
\label{ap:consensus}

Note that $\widetilde{A}(k)$ is a column-stochastic matrix, i.e., $\bone^\T\widetilde{A}(k) = \bone^\T$ for all $k \geq 0$. Lemma \ref{lm:average} below shows that the summation of $\vv_i(k)$ is exactly equal to that of $\x_i(k)$.

\begin{lm}
  \label{lm:average}
  Let $\widetilde{A}(k)$ be such that $\sumi[\widetilde{A}(k)]_{ij} = 1$ for every $j$ and $k\ge 0$. Then,
  \begin{equation}
    \label{eq:average}
    \sum_{i=1}^{\tilde{n}}\vv_i(k) = \sumi \x_i(k),
  \end{equation}
  where $\{\vv_i(k)\}$ and $\{\x_i(k)\}$ are two sequences generated by \eqref{eq:pushsum}.
\end{lm}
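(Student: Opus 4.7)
The strategy is a straightforward induction on $k$, exploiting the column-stochasticity of $\widetilde{A}(k)$ together with the telescoping structure of $\Delta X(k)$.

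For the base case $k=0$, I would invoke the initial conditions specified just before the algorithm's recursion is compactly written: for $i\in\mathcal{N}$ we have $\vv_i(0)=\x_i(0)$ from Line \ref{initialb} of Algorithm \ref{algo:asypa}, while for the virtual nodes $i=n+1,\dots,\tilde{n}$ we have $\vv_i(0)=\bzero_p$. Summing these yields $\sum_{i=1}^{\tilde{n}}\vv_i(0)=\sum_{i=1}^n\x_i(0)$, establishing \eqref{eq:average} at $k=0$.

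For the inductive step, suppose \eqref{eq:average} holds at iteration $k$. Left-multiplying the first equation of \eqref{eq:pushsum} by $\bone^\T$ and using the hypothesized column-stochasticity of $\widetilde{A}(k)$ (i.e., $\bone^\T\widetilde{A}(k)=\bone^\T$) gives
\begin{equation*}
\bone^\T\widetilde{W}(k+1) \;=\; \bone^\T\widetilde{A}(k)\widetilde{V}(k) \;=\; \bone^\T\widetilde{V}(k) \;=\; \sum_{i=1}^{\tilde{n}}\vv_i(k).
\end{equation*}
Next, applying $\bone^\T$ to the last equation of \eqref{eq:pushsum} and using the fact that the last $bn$ coordinates of $\Delta X(k)$ are zero gives
\begin{equation*}
\sum_{i=1}^{\tilde{n}}\vv_i(k+1) \;=\; \bone^\T\widetilde{W}(k+1) + \sum_{i=1}^{n}\bigl(\x_i(k+1)-\x_i(k)\bigr).
\end{equation*}
Combining the two displays with the induction hypothesis $\sum_{i=1}^{\tilde{n}}\vv_i(k)=\sum_{i=1}^n\x_i(k)$ produces a telescoping cancellation yielding $\sum_{i=1}^{\tilde{n}}\vv_i(k+1)=\sum_{i=1}^n\x_i(k+1)$, which closes the induction.

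There is no real obstacle here: the lemma is a conservation/invariance statement, and the only two ingredients needed are (i) the correct initialization of the virtual-node coordinates to $\bzero_p$ and (ii) column-stochasticity of $\widetilde{A}(k)$, both of which are already in place. The one bookkeeping point worth being explicit about is that $\Delta X(k)$ indeed vanishes on the virtual-node block, since the virtual nodes only propagate stored information and never execute the projected pseudo-gradient step; this ensures the added correction term reduces cleanly to $\sum_{i=1}^n(\x_i(k+1)-\x_i(k))$ and the telescoping goes through.
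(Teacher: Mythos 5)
Your proof is correct and follows essentially the same route as the paper: induction on $k$, with the base case from the initialization ($\vv_i(0)=\x_i(0)$ for players, $\bzero_p$ for virtual nodes) and the inductive step obtained by summing the last recursion of \eqref{eq:pushsum} over all $\tilde{n}$ coordinates, using column-stochasticity of $\widetilde{A}(k)$ and the vanishing of $\Delta X(k)$ on the virtual-node block. The only difference is cosmetic—you phrase the summation as left-multiplication by $\bone^\T$, while the paper writes out the double sum explicitly.
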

\begin{proof}
  From the initialization step of Algorithm \ref{algo:asypa}, \eqref{eq:average} holds for $k=0$. Taking the summation over $i$ on the both sides of the last step in \eqref{eq:pushsum}, we have that
  \begin{equation*}
    \begin{medsize}
    \begin{aligned}
      \sum_{i=1}^{\tilde{n}}\vv_i(k+1) &= \sum_{i=1}^{\tilde{n}}\sum_{j=1}^{\tilde{n}}[\widetilde{A}(k)]_{ij}\vv_j(k) + \sumi [\x_i(k+1) - \x_i(k)] \\
      &= \sum_{j=1}^{\tilde{n}}\vv_j(k) + \sumi \x_i(k+1) - \sumi \x_i(k).
    \end{aligned}
  \end{medsize}
  \end{equation*}
  The result \eqref{eq:average} is then arrived at by induction.
\end{proof}

Defining 
$
\Phi(k, t) = \widetilde{A}(k)\widetilde{A}(k-1)\cdots\widetilde{A}(k-t),
$
Lemma \ref{lm:transMatrix} shows that it converges to a rank one matrix with identical columns, and that $\widetilde{y}_i(k)$ is positive for all $i\in\mathcal{N}$ and $k\geq 0$.

\begin{lm}[\citealp{zhangAsySPAExactAsynchronous2019}]\label{lm:transMatrix}
	Under Assumption \ref{as:graph}, the following statements are in force.
	\begin{enumerate}
		\item There exists a nonnegative vector $\phi(k)$ satisfying $\bone^\T\phi(k)=1$ and
    \begin{equation}
      \left|[\Phi(k,t)]_{ij} - \phi_i(k)\right|\leq B'\lambda^{t}
    \end{equation}
    for all $k\geq t \geq 0$, where
    \begin{equation}
      \textstyle
      B' = 4(1+n^{nb}),\ \lambda=\left(1-\frac{1}{n^{nb}}\right)^{\frac{1}{nb}}
    \end{equation}
    and $b$ is as defined in Lemma \ref{lm:delays}(c).
		\item $\widetilde{y}_i(k) = \sum_{j=1}^n[\Phi(k,k)]_{ij}\geq {n^{-nb}},\ \forall i\in\mathcal{N},k \geq 0.$
	\end{enumerate}
\end{lm}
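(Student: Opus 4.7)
The plan is to prove (a) by unrolling the linear recursion for $\widetilde{W}$ and $\widetilde{\y}$, plugging in the rank-one approximation of $\Phi(k,t)$ from Lemma \ref{lm:transMatrix}, and then recognizing that the ``aligned'' part of the sum collapses to $n\phi_i(k)\overline{\x}(k)$ by the conservation identity in Lemma \ref{lm:average}. Parts (b) and (c) will follow by a Cesàro/convolution argument once $\Delta(t)\to 0$ and $\sum_k\rho(k)\Delta(k)<\infty$ have been established.

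First I would iterate $\widetilde{V}(k+1)=\widetilde{A}(k)\widetilde{V}(k)+\Delta X(k)$ to obtain
$\widetilde{W}(k+1)=\Phi(k,k)\widetilde{V}(0)+\sum_{s=0}^{k-1}\Phi(k,k-1-s)\Delta X(s)$
and analogously $\widetilde{\y}(k+1)=\Phi(k,k)\widetilde{\y}(0)$ (no perturbation). Writing $[\Phi(k,t)]_{ij}=\phi_i(k)+\delta_{ij}(k,t)$ with $|\delta_{ij}(k,t)|\le B'\lambda^{t}$, substituting and collecting the $\phi_i(k)$-terms gives
$\w_i(k+1)=\phi_i(k)\bigl[\sum_j\vv_j(0)+\sum_{s=0}^{k-1}\sum_j[\Delta X(s)]_j\bigr]+E_i(k)$
and $y_i(k+1)=n\phi_i(k)+\epsilon_i(k)$, where $|\epsilon_i(k)|\le nB'\lambda^{k}$. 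The bracketed quantity telescopes to $\sum_j\vv_j(k)$, which by Lemma \ref{lm:average} equals $n\overline{\x}(k)$. Hence $\w_i(k+1)-y_i(k+1)\overline{\x}(k)=E_i(k)-\epsilon_i(k)\overline{\x}(k)$, and dividing by $y_i(k+1)\ge n^{-nb}$ from Lemma \ref{lm:transMatrix}(b) gives
$\|\z_i(k+1)-\overline{\x}(k)\|\le n^{nb}(\|E_i(k)\|+|\epsilon_i(k)|\,\|\overline{\x}(k)\|)$.
Bounding $\|E_i(k)\|$ by $B'nM\lambda^{k}+B'\sum_{s=0}^{k-1}\lambda^{k-1-s}\Delta(s)$ using $\sum_j\|\vv_j(0)\|\le nM$ and $\sum_j\|[\Delta X(s)]_j\|=\Delta(s)$, then absorbing the $\lambda^{-1}$ factor and constants into $B=8n^{nb}(1+n^{nb})$, yields claim (a).

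For (b), boundedness of $F_i$ on the compact $\X_i\times\overline{\X}$ and Algorithm \ref{algo:asypa} give $\|\x_i(t+1)-\x_i(t)\|\le C\alpha_i(t)$ when $i$ updates. Since $l_i(t+1)-l_i(t)\le nb+1$ by Lemma \ref{lm:delays}(c) and $\rho(s)\to 0$ by Assumption \ref{as:pcpa}(c), we have $\alpha_i(t)\to 0$, hence $\Delta(t)\to 0$. The convolution $\sum_{s=0}^{k}\lambda^{k-s}\Delta(s)$ with a vanishing $\Delta(s)$ and geometrically decaying kernel also tends to zero (a routine splitting into $s\le k/2$ and $s>k/2$), and the leading term $\lambda^{k}nM$ vanishes; combining with (a) proves (b). For (c), I would swap the order of summation:
$\sum_{k=0}^{\infty}\rho(k)\sum_{s=0}^{k}\lambda^{k-s}\Delta(s)=\sum_{s=0}^{\infty}\Delta(s)\sum_{k\ge s}\rho(k)\lambda^{k-s}\le\frac{1}{1-\lambda}\sum_{s=0}^{\infty}\rho(s)\Delta(s)$,
using the monotonicity of $\rho$. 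It then suffices to show $\sum_s\rho(s)\Delta(s)<\infty$. Since $\rho(s)\alpha_i(s)=\rho(s)\sum_{t=l_i(s)}^{l_i(s+1)-1}\rho(t)\le(nb+1)\rho(l_i(s))^{2}$ (using $\rho$ non-increasing and $l_i(s)\le s$), summing over $s$ telescopes the $r_i$-index and bounds the total by $(nb+1)^{2}\sum_t\rho(t)^{2}<\infty$ by Assumption \ref{as:pcpa}(c). The decaying geometric term $\sum_k\rho(k)\lambda^{k}nM$ is trivially finite since $\rho$ is bounded.

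The main obstacle is the bookkeeping in (c): one must carefully exploit the uniform bound $l_i(k+1)-l_i(k)\le nb+1$ from Lemma \ref{lm:delays}(c) together with the monotonicity of $\rho$ to convert the per-update stepsizes $\alpha_i$ into a telescoping sum of $\rho^{2}$'s; a naive bound using $\rho(k)\alpha_i(k)\le\rho(k)(nb+1)\rho(l_i(k))$ is not immediately square-summable because $l_i(k)$ can lag $k$. Once this conversion is in place, the rest of the argument is standard convolution/Abel-summation manipulation, and the prefactor $B$ falls out by tracking $B'=4(1+n^{nb})$ through the $\lambda^{-1}$ and $n^{nb}$ scalings.
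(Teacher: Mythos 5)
Your proposal does not prove the statement it was asked to prove. The statement is Lemma \ref{lm:transMatrix} itself: the weak-ergodicity estimate $\left|[\Phi(k,t)]_{ij}-\phi_i(k)\right|\leq B'\lambda^{t}$ for the backward products of the column-stochastic matrices $\widetilde{A}(k)$ over the augmented network, together with the uniform lower bound $\widetilde{y}_i(k)\geq n^{-nb}$ for the real players. What you have written is instead a proof of Lemma \ref{lm:consensus} (the three-part consensus/tracking result for $\z_i(k+1)-\overline{\x}(k)$), and it \emph{uses} Lemma \ref{lm:transMatrix} as an ingredient in its very first step (``plugging in the rank-one approximation of $\Phi(k,t)$ from Lemma \ref{lm:transMatrix}'') and again when dividing by $y_i(k+1)\geq n^{-nb}$. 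Offered as a proof of Lemma \ref{lm:transMatrix}, the argument is circular; offered as a proof of Lemma \ref{lm:consensus}, it is answering a different question. In the paper itself, Lemma \ref{lm:transMatrix} is not proved at all but imported from \citet{zhangAsySPAExactAsynchronous2019}.

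The missing content is the ergodicity argument: one must show that, under Assumption \ref{as:graph}, the activated augmented graphs $\widetilde{\mathcal{G}}(k)$ are jointly strongly connected over windows of length $b$ (via Lemma \ref{lm:delays}), that each $\widetilde{A}(k)$ is column-stochastic with $[\widetilde{A}(k)]_{ii}>0$ for real players and entries bounded below by $1/n$ when nonzero on the relevant support, and hence that after $nb$ steps every column of $\Phi(k,k-nb)$ has all entries indexed by $\mathcal{N}$ bounded below by $n^{-nb}$. This scrambling property gives both the lower bound in part (b) and, via a standard contraction-coefficient (Hajnal-type) argument, the geometric decay of the column spread with rate $\lambda=(1-n^{-nb})^{1/(nb)}$ and constant $B'=4(1+n^{nb})$, with $\phi(k)$ obtained as the limit of the columns. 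None of this appears in your write-up. (As a side remark, the convolution and telescoping manipulations you sketch for parts (b) and (c) of Lemma \ref{lm:consensus} are reasonable and close in spirit to how the paper handles that lemma by reduction to \citet[Lemma 1]{nedicDistributedOptimizationTimeVarying2015}, but that is not the task here.)
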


\textit{Proof of Lemma \ref{lm:consensus}.}
\begin{enumerate}
  \item \label{pf:consensusA} By Lemma \ref{lm:average}, we have that $\bar{\x}(k) = \frac{1}{\tilde{n}}\sum_{i=1}^{\tilde{n}}\widetilde{\vv}_i(k)$. Then, the proof is similar to that of \citet[Lemma 1]{nedicDistributedOptimizationTimeVarying2015}.
  \item It follows from \eqref{eq:pushsum} and the projection theorem that
    \begin{equation*}
      \|\x_i(t+1) - \x_i(t)\| \leq \alpha_i(t) \|F_i(\x_i(t), \z_i(t+1))\|.
    \end{equation*}
    It follows from Assumption \ref{as:FL} that
    $$ 
      \begin{aligned}
      \|F_i(\x_i(t), \z_i(t+1))\| &\leq \|F_i(\x_i(t), \bar{\x}(t))\| + L\|\z_i(t + 1) - \bar{\x}(t)\|.
      \end{aligned}
    $$
    Thus, $\|F_i(\x_i(t), \z_i(t+1))\|$ is bounded under Assumption \ref{as:f} and (a).
    Besides, it follows from Lemma \ref{lm:delays}(c) and Assumption \ref{as:pcpa}(c) that $\lim_{t \to \infty}\alpha_i(t) = 0$. Thus, we have that $\|\x_i(t + 1) - \x_i(t)\| \to 0$ for all $i \in \mathcal{N}$. Then, the rest of the proof and the proof of \ref{lm:consensusD} are similar to that of \citet[Lemma 1]{nedicDistributedOptimizationTimeVarying2015}. \qed
\end{enumerate}

\bibliographystyle{agsm}
\bibliography{ref}

\end{document}